\newtheorem{theorem}{Theorem}
\newtheorem{lemma}[theorem]{Lemma}
\newtheorem{proposition}[theorem]{Proposition}
\newtheorem{corollary}[theorem]{Corollary}
\newtheorem{maintheorem}{Main Theorem}
\theoremstyle{definition}
\newtheorem*{definition}{Definition}
\theoremstyle{remark}
\numberwithin{equation}{section}
\numberwithin{equation}{section}
\newcommand*\owedge{\mathpalette\@owedge\relax}
\newcommand*\@owedge[1]{%
  \mathbin{%
    \ooalign{%
      $#1\m@th\bigcirc$\cr
      \hidewidth$#1\m@th\wedge$\hidewidth\cr
    }%
  }%
}
\numberwithin{equation}{section}
\title{Positivity of curvature on manifolds with boundary}
\author{Tsz-Kiu Aaron Chow}
\address{Department of Mathematics, Columbia University, New York, NY}
\email{achow@math.columbia.edu}
\begin{document}

\maketitle

\begin{abstract}
Consider a compact manifold $M$ with smooth boundary $\partial M$. Suppose that $g$ and $\tilde{g}$ are two Riemannian metrics on $M$. We construct a family of metrics on $M$ which agrees with $g$ outside a neighborhood of $\partial M$ and agrees with $\tilde{g}$ in a neighborhood of $\partial M$. We prove that the family of metrics preserves various natural curvature conditions under suitable assumptions on the boundary data. Moreover, under suitable assumptions on the boundary data, we can deform a metric to one with totally geodesic boundary while preserving various natural curvature conditions.
\end{abstract}

\maketitle

\section{Introduction}
\bigskip
In this paper, we study the interaction between boundary geometry and internal geometry of Riemannian manifolds. The study of such an interaction dates back to the work by Gromov and Lawson \cite{GL}, who observed that a metric of positive scalar curvature can be constructed on the double of a compact manifold with boundary which equips with metrics of positive scalar curvature and mean convex boundary. In the work of Shi and Tam \cite{ST}, the boundary behavior of compact manifolds with nonnegative scalar curvature was studied. They proved that the integral of mean curvature of the boundary of a spin manifold cannot be greater than the integral of mean curvature of its isometrically embedded convex image in Euclidean space. Their work implies positivity of the Brown-York quasilocal mass. On the other hand, the study of boundary effects to internal curvatures also plays an important role in studying manifolds with rough data. Positive mass theorem on manifolds with corners was proved by Miao \cite{Miao}. In his work, a smoothing procedure for the metric was done by constructing an "intrinsic bending" of Riemannian manifolds along the boundary that keeps the scalar curvature non-decreasing. More work were done for positive mass theorem on manifolds with corner in the literature such as the result by McFeron and Szekelyhidi \cite{MS}. Another important result that involves the study of boundary effects to internal curvatures is the counter example constructed by Brendle, Marques and Neves \cite{BMN} to the Min-Oo's conjecture. More specifically, a perturbation argument was performed in \cite{BMN} to make the boundary totally geodesic while keeping the scalar curvature non-decreasing. More related results can be found in a recent paper by Gromov \cite{Gm}.

 Inspired by these beautiful results, it is interesting to get a further understanding to how deformation of boundary data affects curvature conditions on the manifold. The goal of this paper is to study the influence of boundary deformation on various curvature conditions on the manifold. Positivity of curvature operator and isotropic curvature are natural curvature conditions that can be imposed on manifolds in higher dimensions. We recall the relavant definitions here:
 \newpage
 
 \begin{definition}\
 We say that
 \begin{itemize}
    \item[(i)] $(M,g)$
    has positive curvature operator if
    $R(\varphi,\varphi)>0$
    for all nonzero two-vectors $\varphi\in \bigwedge^2T_pM$ and all $p\in M$.\\
    \item [(ii)]  $(M,g)$ is (weakly) PIC if $R(z,w,\bar{z}.\bar{w}) > 0\ (\geq 0)$
    for all linearly independent vectors $z,w\in T_pM\otimes \mathbb{C}$ such that $g(z,z)=g(w,w)=g(z,w)=0$.\\
    \item[(iii)]   $(M,g)$ is (weakly) PIC1 if $R(z,w,\bar{z}.\bar{w})> 0\ (\geq 0)$
    for all linearly independent vectors $z,w\in T_pM\otimes \mathbb{C}$ such that     $g(z,z)g(w,w)-g(z,w)^2=0$.\\
    \item[(iv)]   $(M,g)$ is (weakly) PIC2 if $R(z,w,\bar{z}.\bar{w})> 0\ (\geq 0)$
    for all linearly independent vectors $z,w\in T_pM\otimes \mathbb{C}$. That means $(M,g)$ has nonnegative complex sectional curvature.
\end{itemize}	
 \end{definition}

Positive isotropic curvature was first introduced in the work of Micallef and Moore \cite{MM}, and variants of this condition were used to prove the Differentiable Sphere Theorem \cite{BS}. For manifolds without boundary, the importance of the curvature conditions above is that they are preserved by the Ricci Flow \cite{Bre08}. Manifolds with metrics satisfying these curvature conditions have rigid topological restrictions. For an initial metric that is strictly PIC, it was proved by Brendle in \cite{Bre19} that the Ricci flow only form neck-pinched singularities for $n\geq 12$. For an initial metric that is strictly PIC1, it was shown by Brendle \cite{Bre08} that the Ricci flow converges to a metric of constant curvature after rescaling. Given the significance of these curvature conditions, it is natural to study how they behave in manifolds with boundary. We are interested in preserving these curvature conditions while deforming the boundary data. To that end, We state the main theorem of this paper:

\begin{maintheorem}
Suppose that $M$ is a compact manifold with smooth boundary, and $g$ is a smooth Riemannian metric on $M$. Suppose that $\tilde{g}$ is another smooth Riemannian metric on $M$ such that $g=\tilde{g}$ on $\partial M$. Then there exists a family of smooth Riemannian metrics $\{\hat{g}_{\lambda}\}_{\lambda>\lambda^*}$ on $M$ and a neighborhood $U$ of $\partial M$ such that
\begin{itemize}
	\item $\hat{g}_{\lambda}$ agrees with $g$ outside $U$.
	\item $\hat{g}_{\lambda}$ agrees with $\tilde{g}$ in a neighborhood of $\partial M$.
\end{itemize}
Moreover, $\hat{g}_{\lambda}$ satisfies:
\begin{itemize}
\item[(i)] $\lim_{\lambda\to\infty}\hat{g}_{\lambda}=g$ in $C^{\alpha}$ for any $\alpha\in (0,1)$.
\item[(ii)] Suppose that $(M,g)$ has a convex boundary and that $(M, \tilde{g})$ has a  weakly convex boundary such that $A_g > A_{\tilde{g}} \geq 0$, then
	\begin{itemize}
	\item $(M,g)$ and $(M,\tilde{g})$ have positive curvature operator $\implies$ $(M,\hat{g}_{\lambda})$ has positive curvature operator;
    \item $(M,g)$ and $(M,\tilde{g})$ are PIC1 $\implies$ $(M,\hat{g}_{\lambda})$ is PIC1;
    \item $(M,g)$ and $(M,\tilde{g})$ are PIC2 $\implies$ $(M,\hat{g}_{\lambda})$ is PIC2.
	\end{itemize}	
\item[(iii)] Suppose that $(M,g)$ has a two-convex boundary and that $(M, \tilde{g})$ has a  weakly two-convex boundary such that 
	$$ A_g(X,X)+A_g(Y,Y) > A_{\tilde{g}}(X,X) + A_{\tilde{g}}(Y,Y) \geq 0 $$
	for all orthonormal $X,Y\in T(\partial M)$, then
	\begin{itemize}
	\item $(M,g)$ and $(M,\tilde{g})$ are PIC $\implies$ $(M,\hat{g}_{\lambda})$ is PIC.
	\end{itemize}
\item[(iv)] Suppose that $(M,g)$ has a mean-convex boundary and that $(M, \tilde{g})$ has a  weakly mean-convex boundary such that $H_g \geq H_{\tilde{g}} >0$, then
	\begin{itemize}
    \item $(M,g)$ and $(M,\tilde{g})$ have positive scalar curvature $\implies$ $(M,\hat{g}_{\lambda})$ has positive scalar curvature.
	\end{itemize}
\end{itemize}	
\end{maintheorem}

The author was informed that a similar result to the Main Theorem 1 was proved by Schlichting in his Ph.D. thesis \cite{Sch} which concerns with preserving curvature positivity on the gluing of two Riemannian manifolds along isometric boundaries. Nevertheless, our result allows the boundary to be two-convex for preserving positive isotropic curvature. The proof in Schlichting's thesis is also different from ours.

In particular, it would be helpful to preserve curvature conditions of a metric while deforming it to one with totally geodesic boundary. This procedure would be useful in smoothing manifolds with rough data. It is also believed that the above natural curvature conditions are preserved by Ricci flow on manifolds with boundary with the help of such a deformation. Here we confirm that such a deformation always exists under natural assumptions on the boundary data:

\begin{maintheorem}
Suppose that $M$ is a compact manifold with smooth boundary, and $g$ is a smooth Riemannian metric on $M$. Then there is a family of smooth Riemannian metrics $\{\hat{g}_{\lambda}\}_{\lambda>\lambda^*}$ on $M$ so that $\hat{g}_{\lambda} = g$ outside a neighborhood of $\partial M$ and satisfies:
\begin{itemize}
\item[(i)] $\lim_{\lambda\to\infty}\hat{g}_{\lambda}=g$ in $C^{\alpha}$ for any $\alpha\in [0,1)$.
\item[(ii)] $(M, \hat{g}_{\lambda})$ has a totally geodesic boundary.
\item[(iii)] If $(M,g)$ has a convex boundary, then
	\begin{itemize}
	\item $(M,g)$ has positive curvature operator $\implies$ $(M,\hat{g}_{\lambda})$ has positive curvature operator;
    \item $(M,g)$ is PIC1 $\implies$ $(M,\hat{g}_{\lambda})$ is PIC1;
    \item $(M,g)$ is PIC2 $\implies$ $(M,\hat{g}_{\lambda})$ is PIC2. 
	\end{itemize}	
\item[(iv)] If $(M,g)$ has a two-convex boundary, then
	\begin{itemize}
	\item $(M,g)$ is PIC $\implies$ $(M,\hat{g}_{\lambda})$ is PIC.
	\end{itemize}
\item[(v)] If $(M,g)$ has a mean-convex boundary, then
	\begin{itemize}
    \item $(M,g)$ has positive scalar curvature $\implies$ $(M,\hat{g}_{\lambda})$ has positve scalar curvature.
	\end{itemize}
\end{itemize}	
\end{maintheorem}

The proof of our results is based on choosing a suitable perturbation, and the family of metrics $\{\hat{g}_{\lambda}\}_{\lambda>\lambda^*}$ is defined by (2) in Section 3. Our work extends the result in \cite{BMN}.

\vskip 0.5cm
\noindent\textbf{Acknowledgement:}
The author would like to express his gratitude to his advisor Professor Simon Brendle for his continuing support, his guidance and many inspiring discussions.

\bigskip
\section{Auxiliary Results}
\bigskip

\begin{lemma}
Consider the Riemannian metrics of the form $\hat{g}=g+h$, where $h$ satisfies the pointwise estimate $|h|_g\leq\frac{1}{2}$. Let $p\in M$ be any point and $\{E_i\}$ be a (local) geodesic orthonormal frame with respect to $g$ around $p$, then the Riemann curvature tensor of $\hat{g}$ satisfies
\begin{align*}
    \hat{R}_{ijkl}=R_{ijkl}+ \frac{1}{2} \, g^{pq} \, R_{ijkp} \, h_{ql} - \frac{1}{2} \, g^{pq} \, R_{ijlp} \, h_{kq}+ E_{ijkl}+F_{ijkl}
\end{align*}
where
\begin{align*}
    E_{ijkl}=\frac{1}{2} \, \big [ -(D_{i,k}^2 h)_{jl} + (D_{i,l}^2 h)_{jl} + (D_{j,k}^2 h)_{il} - (D_{j,l}^2 h)_{ik} \big ]
\end{align*}
\begin{align*}
    F_{ijkl}=&\frac{1}{4} \, \hat{g}^{pq} \, [(D_j h)_{kp} + (D_k h)_{jp} - (D_p h)_{jk}] \, [(D_i h)_{lq} + (D_l h)_{iq} - (D_q h)_{il}] \\ 
&- \frac{1}{4} \, \hat{g}^{pq} \, [(D_i h)_{kp} + (D_k h)_{ip} - (D_p h)_{ik}] \, [(D_j h)_{lq} + (D_l h)_{jq} - (D_q h)_{jl}] 
\end{align*}
Here $D$ is the Levi-Civita connection with respect to $g$.
\end{lemma}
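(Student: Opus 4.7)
The plan is to use the standard tensor-calculus technique for computing the curvature of a conformally/affinely related connection. The difference $\hat{\nabla} - D$ of Levi-Civita connections is tensorial, and Koszul's formula gives it explicitly: $\hat{\nabla}_X Y - D_X Y = A(X,Y)$ with
\[ A^k_{ij} = \tfrac12 \hat{g}^{kl}\bigl((D_i h)_{jl} + (D_j h)_{il} - (D_l h)_{ij}\bigr). \]
Plugging $\hat{\nabla} = D + A$ into the curvature defining identity $\hat{R}(X,Y)Z = \hat{\nabla}_X\hat{\nabla}_Y Z - \hat{\nabla}_Y\hat{\nabla}_X Z - \hat{\nabla}_{[X,Y]}Z$ and expanding gives the universal formula
\[ \hat{R}^m_{\phantom{m}ijk} = R^m_{\phantom{m}ijk} + (D_i A)^m_{jk} - (D_j A)^m_{ik} + A^m_{is}A^s_{jk} - A^m_{js}A^s_{ik}. \]
To obtain $\hat R_{ijkl}$, I then lower the index using $\hat g_{lm} = g_{lm} + h_{lm}$.

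Next I evaluate at $p$ in the given geodesic orthonormal frame, where $\Gamma(p)=0$ and $\partial_i g_{jk}(p)=0$, so $D$ reduces to partial differentiation on tensors at $p$. I organize the output by order in $h$. The zeroth-order piece is $R_{ijkl}$. The linear-in-$h$ piece comes from three sources: (i) the contribution $h_{lm}R^m_{\phantom{m}ijk}$ obtained when we lower with $h$ instead of $g$, (ii) the straight second covariant derivatives of $h$ inside $(D_iA)^m_{jk}-(D_jA)^m_{ik}$, and (iii) the commutator of those second covariant derivatives. For (iii), I apply the Ricci identity $(D_iD_j - D_jD_i)h_{kl} = -h_{pl}R^p_{\phantom{p}kij} - h_{kp}R^p_{\phantom{p}lij}$, which produces exactly the curvature-times-$h$ corrections. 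Combining (i) and (iii) yields the two terms $\tfrac12 g^{pq}R_{ijkp}h_{ql} - \tfrac12 g^{pq}R_{ijlp}h_{kq}$ in the stated formula (the factor $\tfrac12$ is the clue that these two sources cancel partially), while (ii) assembles into $E_{ijkl}$ once the antisymmetry in $(i,j)$ and $(k,l)$ is exploited.

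For the quadratic piece, there are four kinds of contributions: $A^m_{is}A^s_{jk}-A^m_{js}A^s_{ik}$ lowered by $g_{lm}$; the derivatives of $\hat g^{-1}$ arising in $(D_i A)^m_{jk}$ via $D_i \hat g^{ml} = -\hat g^{mp}\hat g^{lq}D_ih_{pq}$; the lowering by $h_{lm}$ applied to $(D_iA)^m_{jk} - (D_jA)^m_{ik}$; and the lowering by $h_{lm}$ applied to the $A\cdot A$ piece. The key structural point is that $F_{ijkl}$ is written with $\hat g^{pq}$ (not $g^{pq}$), so rather than expanding $\hat g^{pq}$ in a power series in $h$ one should keep it intact and verify by direct algebraic manipulation that all four contributions collapse into the single bilinear expression $F_{ijkl}$ displayed in the lemma. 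I expect this bookkeeping, and in particular showing that the extra terms from $D\hat g^{-1}$ are precisely absorbed when one promotes $g^{pq}$ to $\hat g^{pq}$ in the bilinear, to be the main technical obstacle; once this reorganization is carried out, the identity is a tensorial one and therefore holds at every point in any frame.
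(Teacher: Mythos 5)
Your plan is essentially the same computation the paper carries out: write $\hat{D}=D+\Lambda$ with $\Lambda$ (your $A$) given by Koszul's formula, substitute into the curvature definition, work at $p$ in $g$-normal coordinates so that $D$ reduces to partial differentiation on tensors, and invoke the Ricci identity for the commutator $D^2_{i,j}-D^2_{j,i}$ acting on $h$ to produce the curvature-times-$h$ corrections, which then combine with the $\hat{g}_{lm}R^m_{\phantom{m}ijk}$ lowering to give $R_{ijkl}+\tfrac{1}{2}g^{pq}R_{ijkp}h_{ql}-\tfrac{1}{2}g^{pq}R_{ijlp}h_{kq}$. The real difference is bookkeeping. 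You propose computing $\hat{R}^m_{\phantom{m}ijk}$ with the index up and lowering at the end, which forces you to differentiate $\hat{g}^{ml}$ inside $(D_iA)^m_{jk}$, giving the $D_i\hat{g}^{ml}=-\hat{g}^{mp}\hat{g}^{lq}D_ih_{pq}$ terms you correctly flag as the main obstacle. The paper avoids this entirely by contracting with $\hat{g}(\cdot,\partial_l)$ from the start: it works with $\hat{g}(\Lambda(\partial_j,\partial_k),\partial_l)=\tfrac{1}{2}\big[(D_jh)_{kl}+(D_kh)_{jl}-(D_lh)_{jk}\big]$, which contains no $\hat{g}^{-1}$, and uses the compatibility identity $(D_X\hat{g})(U,V)=\hat{g}(\Lambda(X,U),V)+\hat{g}(U,\Lambda(X,V))$ to dispose of the residual $(D_X\hat{g})(\Lambda(Y,Z),W)$ term. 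With this organization, the $\hat{g}^{pq}$ in $F_{ijkl}$ arises in one stroke from $\hat{g}(\Lambda(\partial_j,\partial_k),\Lambda(\partial_i,\partial_l))$, with no separate recombination needed. Your route does close up: the Leibniz rule gives $\hat{g}_{lm}(D_iA)^m_{jk}=\tfrac{1}{2}D_i\big[(D_jh)_{kl}+(D_kh)_{jl}-(D_lh)_{jk}\big]-(D_ih_{lm})A^m_{jk}$, and combining the correction term with $\hat{g}_{lm}A^m_{is}A^s_{jk}$ reproduces exactly $F_{ijkl}$ -- but this is precisely the step you left unverified, and it is the one the paper's contraction trick renders painless. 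Also keep the sign conventions straight: the paper defines $\hat{R}_{ijkl}=-\hat{g}(\hat{D}_{\partial_i}\hat{D}_{\partial_j}\partial_k,\partial_l)+\hat{g}(\hat{D}_{\partial_j}\hat{D}_{\partial_i}\partial_k,\partial_l)$, so lowering your $\hat{R}^m_{\phantom{m}ijk}$ with $\hat{g}_{lm}$ must be done with the matching sign.
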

\bigskip

\begin{proof}
Let $g$ and $\hat{g}$ be two Riemannian metrics. Let $D$ denote the Levi-Civita connection associated with $g$, and let $\hat{D}$ denote the Levi-Civita connection associated with $\hat{g}$. Then 
\[\hat{D}_X Y = D_X Y + \Lambda(X,Y),\] 
where the tensor $\Lambda$ is given by 
\[2 \, \hat{g}(\Lambda(X,Y),Z) = (D_X \hat{g})(Y,Z) + (D_Y \hat{g})(X,Z) - (D_Z \hat{g})(X,Y).\]  
This gives 
\[\hat{D}_X \hat{D}_Y Z = D_X D_Y Z + \Lambda(X,D_Y Z) + D_X(\Lambda(Y,Z)) + \Lambda(X,\Lambda(Y,Z)).\] 
Consequently, 
\begin{align*} 
\hat{g}(\hat{D}_X \hat{D}_Y Z,W) 
&= \hat{g}(D_X D_Y Z,W) + \hat{g}(\Lambda(X,D_Y Z),W) \\ 
&\quad+ \hat{g}(D_X(\Lambda(Y,Z)),W) + \hat{g}(\Lambda(X,\Lambda(Y,Z)),W) \\ 
&= \hat{g}(D_X D_Y Z,W) + \hat{g}(\Lambda(X,D_Y Z),W) \\ 
&\quad+ X(\hat{g}(\Lambda(Y,Z),W)) - (D_X \hat{g})(\Lambda(Y,Z),W) \\ 
&\quad- \hat{g}(\Lambda(Y,Z),D_X W) + \hat{g}(\Lambda(X,\Lambda(Y,Z)),W).
\end{align*}
Using the identity $(D_X \hat{g})(U,V) = \hat{g}(\Lambda(X,U),V) + \hat{g}(U,\Lambda(X,V))$, we obtain 
\[(D_X \hat{g})(\Lambda(Y,Z),W) = \hat{g}(\Lambda(X,\Lambda(Y,Z)),W) + \hat{g}(\Lambda(Y,Z),\Lambda(X,W)),\] 
hence 
\begin{align*} 
\hat{g}(\hat{D}_X \hat{D}_Y Z,W) 
&= \hat{g}(D_X D_Y Z,W) + \hat{g}(\Lambda(X,D_Y Z),W) \\ 
&\quad + X(\hat{g}(\Lambda(Y,Z),W)) - \hat{g}(\Lambda(Y,Z),\Lambda(X,W)) \\ 
&\quad- \hat{g}(\Lambda(Y,Z),D_X W).
\end{align*}
In the following, we fix a point $p$, and work in geodesic normal coordinates around the point $p$. Putting $X = \partial_i$, $Y = \partial_j$, $Z = \partial_k$, $W = \partial_l$ gives 
\[\hat{g}(\hat{D}_{\partial_i} \hat{D}_{\partial_j} \partial_k,\partial_l) = \hat{g}(D_{\partial_i} D_{\partial_j} \partial_k,\partial_l) + \partial_i(\hat{g}(\Lambda(\partial_j,\partial_k),\partial_l)) - \hat{g}(\Lambda(\partial_j,\partial_k),\Lambda(\partial_i,\partial_l))\] 
at $p$. Next, we switch $i$ and $j$ and take the difference. Note that 
\[-\hat{g}(\hat{D}_{\partial_i} \hat{D}_{\partial_j} \partial_k,\partial_l) + \hat{g}(\hat{D}_{\partial_j} \hat{D}_{\partial_i} \partial_k,\partial_l) = \hat{R}_{ijkl}\] 
and 
\[-\hat{g}(D_{\partial_i} D_{\partial_j} \partial_k,\partial_l) + \hat{g}(D_{\partial_j} D_{\partial_i} \partial_k,\partial_l) = g^{pq} \, R_{ijkp} \, \hat{g}_{ql}\] 
at $p$. Moreover, using the definition of $\Lambda$, we obtain 
\begin{align*} 
&- \partial_i(\hat{g}(\Lambda(\partial_j,\partial_k),\partial_l)) + \partial_j(\hat{g}(\Lambda(\partial_i,\partial_k),\partial_l)) \\ 
&= -\frac{1}{2} \, \big [ (D_{i,j}^2 \hat{g})_{kl} + (D_{i,k}^2 \hat{g})_{jl} - (D_{i,l}^2 \hat{g})_{jk} \big ] + \frac{1}{2} \, \big [ (D_{j,i}^2 \hat{g})_{kl} + (D_{j,k}^2 \hat{g})_{il} - (D_{j,l}^2 \hat{g})_{ik} \big ] \\ 
&= -\frac{1}{2} \, g^{pq} \, R_{ijkp} \, \hat{g}_{ql} - \frac{1}{2} \, g^{pq} \, R_{ijlp} \, \hat{g}_{kq} \\ 
&\quad+ \frac{1}{2} \, \big [ -(D_{i,k}^2 \hat{g})_{jl} + (D_{i,l}^2 \hat{g})_{jl} + (D_{j,k}^2 \hat{g})_{il} - (D_{j,l}^2 \hat{g})_{ik} \big ] \\ 
\end{align*} 
at $p$. Finally, 
\begin{align*}
&\hat{g}(\Lambda(\partial_j,\partial_k),\Lambda(\partial_i,\partial_l)) - \hat{g}(\Lambda(\partial_i,\partial_k),\Lambda(\partial_j,\partial_l)) \\ 
&= \frac{1}{4} \, \hat{g}^{pq} \, [(D_j \hat{g})_{kp} + (D_k \hat{g})_{jp} - (D_p \hat{g})_{jk}] \, [(D_i \hat{g})_{lq} + (D_l \hat{g})_{iq} - (D_q \hat{g})_{il}] \\ 
&\quad- \frac{1}{4} \, \hat{g}^{pq} \, [(D_i \hat{g})_{kp} + (D_k \hat{g})_{ip} - (D_p \hat{g})_{ik}] \, [(D_j \hat{g})_{lq} + (D_l \hat{g})_{jq} - (D_q \hat{g})_{jl}] 
\end{align*} 
at $p$. Putting these facts together, we obtain 
\begin{align*} 
\hat{R}_{ijkl} 
&= \frac{1}{2} \, g^{pq} \, R_{ijkp} \, \hat{g}_{ql} - \frac{1}{2} \, g^{pq} \, R_{ijlp} \, \hat{g}_{kq} \\ 
&\quad+ \frac{1}{2} \, \big [ -(D_{i,k}^2 \hat{g})_{jl} + (D_{i,l}^2 \hat{g})_{jl} + (D_{j,k}^2 \hat{g})_{il} - (D_{j,l}^2 \hat{g})_{ik} \big ] \\ 
&\quad+ \frac{1}{4} \, \hat{g}^{pq} \, [(D_j \hat{g})_{kp} + (D_k \hat{g})_{jp} - (D_p \hat{g})_{jk}] \, [(D_i \hat{g})_{lq} + (D_l \hat{g})_{iq} - (D_q \hat{g})_{il}] \\ 
&\quad- \frac{1}{4} \, \hat{g}^{pq} \, [(D_i \hat{g})_{kp} + (D_k \hat{g})_{ip} - (D_p \hat{g})_{ik}] \, [(D_j \hat{g})_{lq} + (D_l \hat{g})_{jq} - (D_q \hat{g})_{jl}] 
\end{align*} 
at $p$. Hence, if we put $h = \hat{g}-g$, then 
\begin{align*} 
\hat{R}_{ijkl} 
&= R_{ijkl} + \frac{1}{2} \, g^{pq} \, R_{ijkp} \, h_{ql} - \frac{1}{2} \, g^{pq} \, R_{ijlp} \, h_{kq} \\ 
&\quad+ \frac{1}{2} \, \big [ -(D_{i,k}^2 h)_{jl} + (D_{i,l}^2 h)_{jl} + (D_{j,k}^2 h)_{il} - (D_{j,l}^2 h)_{ik} \big ] \\ 
&\quad+ \frac{1}{4} \, \hat{g}^{pq} \, [(D_j h)_{kp} + (D_k h)_{jp} - (D_p h)_{jk}] \, [(D_i h)_{lq} + (D_l h)_{iq} - (D_q h)_{il}] \\ 
&\quad- \frac{1}{4} \, \hat{g}^{pq} \, [(D_i h)_{kp} + (D_k h)_{ip} - (D_p h)_{ik}] \, [(D_j h)_{lq} + (D_l h)_{jq} - (D_q h)_{jl}] 
\end{align*} 
at $p$.

\end{proof}

\bigskip

\begin{lemma} Let $S$ and $T$ be real symmetric bilinear forms on $T_xM$ . If $S$ and $T$ are (weakly) positive, then $S\owedge T$ is (weakly) positive on $\bigwedge^2T_xM^{\mathbb{C}}$.
		
\end{lemma}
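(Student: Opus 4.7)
The plan is to exhibit $S \owedge T$ as an explicit sum of rank-one non-negative Hermitian forms on $\bigwedge^2 T_xM^{\mathbb{C}}$, so that both weak and strict positivity become manifest. Recall that (up to a fixed overall constant) the Kulkarni--Nomizu product of two symmetric $2$-tensors reads $(S \owedge T)_{ijkl} = S_{ik}T_{jl} + S_{jl}T_{ik} - S_{il}T_{jk} - S_{jk}T_{il}$, and the entire argument is organized around factoring this expression.

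The first step is to apply the spectral theorem to $S$ and $T$ separately. Since $S$ is real, symmetric, and weakly positive, I can write $S = \sum_a \omega^a \otimes \omega^a$ for some real covectors $\omega^1,\dots,\omega^r \in T_x^*M$ (with $r$ the rank of $S$); similarly $T = \sum_b \eta^b \otimes \eta^b$ for real covectors $\eta^1,\dots,\eta^s \in T_x^*M$. Substituting these representations into the Kulkarni--Nomizu formula and grouping the four resulting quartic terms according to the antisymmetrizations in $(i,j)$ and $(k,l)$ yields the identity $(S \owedge T)_{ijkl} = \sum_{a,b}(\omega^a \wedge \eta^b)_{ij}\,(\omega^a \wedge \eta^b)_{kl}$, where $(\omega \wedge \eta)_{ij} = \omega_i\eta_j - \omega_j\eta_i$. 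This is a routine expansion and is the key algebraic step.

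Given the identity, the positivity is immediate: for any $\varphi \in \bigwedge^2 T_xM^{\mathbb{C}}$, the Hermitian evaluation becomes $(S \owedge T)(\varphi,\bar\varphi) = \sum_{a,b} \bigl|(\omega^a \wedge \eta^b)(\varphi)\bigr|^2 \geq 0$, which handles the weak case. For strict positivity, assume $S, T > 0$. Then $\{\omega^a\}$ and $\{\eta^b\}$ are each bases of $T_x^*M$; since every simple $2$-form $u \wedge v$ expands as a linear combination of the $\omega^a \wedge \eta^b$, and simple $2$-forms span $\bigwedge^2 T_x^*M$, the family $\{\omega^a \wedge \eta^b\}_{a,b}$ also spans $\bigwedge^2 T_x^*M$. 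Hence if $\varphi \neq 0$ at least one pairing $(\omega^a \wedge \eta^b)(\varphi)$ is nonzero, making the sum strictly positive.

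The only step requiring any care is the rank-one decomposition identity; everything else is linear algebra. I do not anticipate a substantive obstacle, because the extension from real to complex $2$-vectors is automatic: each summand is already the squared modulus of a $\mathbb{C}$-linear functional on $\bigwedge^2 T_xM^{\mathbb{C}}$, so positivity passes from the real setting to the complexification with no extra work.
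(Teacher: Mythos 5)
Your proof is correct and takes essentially the same route as the paper: both write $S$ and $T$ as sums of rank-one symmetric forms and exhibit $S \owedge T$ as a sum of squared moduli, $(S\owedge T)(\varphi,\bar\varphi) = \sum_{a,b}\lvert(\omega^a\wedge\eta^b)(\varphi)\rvert^2$. The one addition on your side is the explicit argument for the strict case (that $\{\omega^a\wedge\eta^b\}$ spans $\bigwedge^2 T_x^*M$ when $S,T>0$), which the paper states in the lemma but does not spell out in its proof.
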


\begin{proof}
By assumption, we can write $S, T$ as a sum of rank-one operators:
\begin{align*}
	S = \sum_{a=1}^n v^a\otimes v^a, \quad T = \sum_{b=1}^n w^b\otimes w^b.	
\end{align*}
 Then for any two-vectors $\varphi = \varphi^{ij}e_i\wedge e_j\in\bigwedge^2T_xM^{\mathbb{C}}$, we have
		\begin{align*}
			(S\owedge T) (\varphi, \bar{\varphi})\ &=\ 4\varphi^{ij}\bar{\varphi}^{kl}S_{ik}T_{jl}\\
			&=\ 4\sum_{a,b}\varphi^{ij}\bar{\varphi}^{kl}v^a_iv^a_kw^b_jw^b_l\\
			&=\ 4 \sum_{a,b}|\varphi^{ij}v^a_iw^b_j|^2\\
			&\geq\ 0.
		\end{align*}
\end{proof}
\bigskip

\begin{lemma}[\text{c.f. \cite{Bre19}, Lemma A.2} ] Let $S$ and $T$ be real symmetric bilinear forms on $T_xM$. If $S$ and $T$ are (weakly) two-positive with respect to the metric $g$, then $S\owedge T$ is (weakly) {\rm PIC} with respect to the metric $g$.
		
\end{lemma}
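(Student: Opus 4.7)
The plan is to mimic the PIC2 argument from Lemma~2, but be more careful about which eigenvalue combinations of $S$ and $T$ the argument actually requires and then supply those combinations from two-positivity. Fix a point $x \in M$ and linearly independent isotropic vectors $z, w \in T_xM \otimes \mathbb{C}$ with $g(z,w) = 0$, and choose an orthonormal frame $\{e_1, e_2, e_3, e_4\}$ of the underlying real $4$-plane so that $z = e_1 + ie_2$ and $w = e_3 + ie_4$. Expanding $(S \owedge T)(z,w,\bar z,\bar w)$ by Kulkarni-Nomizu and taking real parts, I expect to obtain
$$(S \owedge T)(z, w, \bar z, \bar w) = (S_{11}+S_{22})(T_{33}+T_{44}) + (S_{33}+S_{44})(T_{11}+T_{22}) - 2\bigl[\alpha(S)\alpha(T) + \beta(S)\beta(T)\bigr],$$
where for a symmetric form $X$ I write $\alpha(X) := X_{13}+X_{24}$ and $\beta(X) := X_{14}-X_{23}$, so that $X(z,\bar w) = \alpha(X) - i\beta(X)$.

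Weak two-positivity immediately yields the diagonal non-negativities $S_{11}+S_{22}, S_{33}+S_{44} \geq 0$ (and likewise for $T$). The key additional inequality I need to absorb the cross term is
$$(S_{11}+S_{22})(S_{33}+S_{44}) \geq \alpha(S)^2 + \beta(S)^2,$$
which I would establish by testing two-positivity of $S$ on the two-parameter family of orthonormal pairs
$$u_{\theta,\phi} = \cos\theta\, e_1 + \sin\theta\,(\cos\phi\, e_3 - \sin\phi\, e_4),\qquad v_{\theta,\phi} = \cos\theta\, e_2 + \sin\theta\,(\sin\phi\, e_3 + \cos\phi\, e_4).$$
A direct computation gives
$$S(u_{\theta,\phi}, u_{\theta,\phi}) + S(v_{\theta,\phi}, v_{\theta,\phi}) = \cos^2\theta\,(S_{11}+S_{22}) + \sin^2\theta\,(S_{33}+S_{44}) + \sin(2\theta)\bigl[\cos\phi\,\alpha(S) - \sin\phi\,\beta(S)\bigr],$$
and non-negativity for every $\theta$ is equivalent to positive semidefiniteness of a $2 \times 2$ quadratic form with entries $(S_{11}+S_{22}, S_{33}+S_{44}, \cos\phi\,\alpha(S)-\sin\phi\,\beta(S))$, i.e.\ $(S_{11}+S_{22})(S_{33}+S_{44}) \geq (\cos\phi\,\alpha(S)-\sin\phi\,\beta(S))^2$; maximizing over $\phi$ gives the claimed bound, and the analogue for $T$ follows identically.

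The conclusion then falls out of Cauchy-Schwarz followed by AM-GM:
$$2\bigl(\alpha(S)\alpha(T)+\beta(S)\beta(T)\bigr) \leq 2\sqrt{(\alpha(S)^2+\beta(S)^2)(\alpha(T)^2+\beta(T)^2)} \leq 2\sqrt{(S_{11}+S_{22})(S_{33}+S_{44})(T_{11}+T_{22})(T_{33}+T_{44})},$$
and one more application of AM-GM bounds the right-hand side by $(S_{11}+S_{22})(T_{33}+T_{44}) + (S_{33}+S_{44})(T_{11}+T_{22})$. For the strict version, strict two-positivity of $S$ and $T$ upgrades the $2 \times 2$ quadratic form step to a strict inequality, which propagates through the chain; and in the degenerate case where $\alpha(S) = \beta(S) = \alpha(T) = \beta(T) = 0$, strict positivity is already provided by the diagonal terms alone. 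The main obstacle is recognizing the correct two-parameter family of orthonormal test pairs that simultaneously probes both $\alpha(S)$ and $\beta(S)$; once this family is in hand, the rest of the argument is elementary.
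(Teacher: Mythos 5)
Your proof is correct, but it takes a genuinely different route from the paper's. Both you and the paper write $z = e_1 + ie_2$, $w = e_3 + ie_4$ in an orthonormal four-frame, but the paper exploits the freedom in choosing $z,w$ within the isotropic plane $\mathrm{span}\{\zeta,\eta\}$ to impose the extra normalization $S(z,\bar w)=0$ (simultaneously diagonalizing the Hermitian forms $g(\cdot,\bar\cdot)$ and $S(\cdot,\bar\cdot)$ on the complex $2$-plane). This kills the Kulkarni--Nomizu cross terms outright, leaving only $(S_{11}+S_{22})(T_{33}+T_{44}) + (S_{33}+S_{44})(T_{11}+T_{22})$, which is manifestly nonnegative. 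You instead keep a general frame, retain the cross term $-2[\alpha(S)\alpha(T)+\beta(S)\beta(T)]$, and absorb it by establishing the auxiliary inequality $(S_{11}+S_{22})(S_{33}+S_{44}) \geq \alpha(S)^2+\beta(S)^2$ directly from two-positivity via the two-parameter family $(u_{\theta,\phi}, v_{\theta,\phi})$, then finishing with Cauchy--Schwarz and AM--GM. The paper's argument is shorter once you accept the simultaneous-diagonalization step; yours is more computational but avoids the normalization trick entirely and isolates the precise quantitative consequence of two-positivity that makes the Kulkarni--Nomizu product work. Two minor points to tighten: you should state all three isotropy conditions $g(z,z)=g(w,w)=g(z,w)=0$ (not just orthogonality), and note that to extract a genuinely orthonormal frame you first replace $z,w$ by a Hermitian-orthonormal basis of the isotropic plane, which is harmless since the sign of $(S\owedge T)(z,w,\bar z,\bar w)$ is invariant under changes of basis of the plane.
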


\begin{proof}
	Let $\zeta, \eta\in T_xM^{\mathbb{C}}$ be linear independent vectors such that
		$$ g(\zeta, \zeta) = g(\zeta, \eta) = g(\eta, \eta) = 0. $$
	We shall show that $(S\owedge T) (\zeta, \eta, \bar{\zeta}, \bar{\eta}) \geq 0$. We can find vectors $z, w\in {\rm span}\{\zeta,\eta\}$ such that $g(z, \bar{z}) = g(w, \bar{w}) = 1$, $g(z, \bar{w}) = 0$ and $S(z, \bar{w}) = 0$. The identities $ g(\zeta, \zeta) = g(\zeta, \eta) = g(\eta, \eta) = 0$ give $ g(z, z) = g(z, w) = g(w, w) = 0$. Consequently, we can find an orthonormal four-frame $\{e_1, e_2, e_3, e_4\}\subset T_xM$ such that
		$$ z = e_1 + i e_2,\quad w = e_3 + ie_4. $$
		Using the identities $S(\bar{z}, w) = S(z, \bar{w}) = 0$, we obtain
		\begin{align*}
			(S\owedge T) (z, w, \bar{z}, \bar{w})\ &=\ S(z, \bar{z}) T(w, \bar{w})\ +\ S(w, \bar{w}) T(z, \bar{z})\\
			&=\ (S_{11} + S_{22})(T_{33} + T_{44})\ +\ (S_{33} + S_{44})(T_{11} + T_{22})\\
			&\geq\ 0.
		\end{align*}
	Since ${\rm span}\{z, w\} = {\rm span}\{\zeta,\eta\}$, we conclude that $(S\owedge T) (\zeta, \eta, \bar{\zeta}, \bar{\eta}) \geq 0$.
\end{proof}
\bigskip

\bigskip

\section{Preserving Curvature Conditions}

\bigskip

In this section, $g$ and $\tilde{g}$ are Riemannian metrics on $M$ such that $g - \tilde{g} = 0$ along $\partial M$. Now, we describe our choice of perturbation as in \cite{BMN}. We fix a neighborhood $U$ of $\partial M$ and a smooth boundary defining function $\rho:M\to [0,\infty)$ by taking it to be the distance function from $\partial M$ with respect to the metric $g$. Then we have $|D\rho|_g =1$. Since $g-\tilde{g}=0$ along $\partial M$, we can find a symmetric (0,2)-tensor $S$ such that  $\tilde{g}=g+\rho S$ in a neighborhood of $\partial M$ and $S=0$ outside $U$. Then for all $X,Y\in T(\partial M)$, the second fundamental forms satisfy
\begin{align*}
    &\frac{1}{2}S(X,Y)=A_g(X,Y)-A_{\tilde{g}}(X,Y),\\
    &D^2\rho(X,Y) = -A_g(X,Y).\notag
\end{align*}
This implies that the identity
\begin{align}
    &A_g(X,Y) - A_{\tilde{g}}(X,Y)=\frac{1}{2}S(X,Y)=-D^2\rho(X,Y)-A_{\tilde{g}}(X,Y)
\end{align}
holds on the boundary $\partial M$ for all $X,Y\in T(\partial M)$.

\bigskip

Let $\chi:[0,\infty)\to[0,1]$ a smooth cut-off function with the following properties (c.f. \cite{BMN}, Lemma 17):
\begin{itemize}
    \item $\chi(s)=s-\frac{1}{2}s^2$ for $s\in[0,\frac{1}{2}]$;
    \item $\chi(s)$ is constant for $s\geq 1$;
    \item $\chi''(s)<0$ for $s\in[0,1)$.
\end{itemize}
\bigskip
Moreover, let $\beta:(-\infty,0]\to[0,1]$ be a smooth cutoff function such that

\begin{itemize}
    \item $\beta(s)=\frac{1}{2}$ for $s\in [-1,0]$;
    \item $\beta(s)=0$ for $s\in (-\infty,-2]$.
\end{itemize}
\bigskip 
Now, if $\lambda>0$ is sufficiently large, we can define a smooth metric $\hat{g}_{\lambda}$ on $M$ by 
\begin{align}
    \hat{g}_{\lambda}=
    \begin{cases}
    g+\lambda^{-1}\chi(\lambda\rho)S\quad\quad\quad\quad &\text{for}\quad \rho\geq e^{-\lambda^2}\\
    \tilde{g}-\lambda\rho^2\beta(\lambda^{-2}\log\rho)S\quad &\text{for}\quad\rho<e^{-\lambda^2}.
    \end{cases}
\end{align}
In the sequel, we will show that $\hat{g}_{\lambda}$ preserves various curvature conditions of $g$ and $\tilde{g}$ for sufficiently large $\lambda$. Note that we have $\hat{g}_{\lambda}=\tilde{g}$ in the region $\{\rho\leq e^{-2\lambda^2}\}$ and $\hat{g}_{\lambda}=g$ outside $U$. 

\bigskip
Now we give a lower bound to the curvature operator of $\hat{g}_{\lambda}$. We first consider the region $\{\rho\geq e^{-\lambda^2}\}$.

\bigskip
\begin{proposition}
 Suppose that $(M,g)$ has a convex boundary and that $(M, \tilde{g})$ has a  weakly convex boundary such that $A_g > A_{\tilde{g}} \geq 0$. Let $\epsilon$ be an arbitrary positive real number. If $\lambda>0$ is sufficiently large, then 
\begin{align*}
    Rm_{\hat{g}_{\lambda}}(x)(\varphi,\bar{\varphi})-Rm_{g}(x)(\varphi,\bar{\varphi})\geq -\epsilon|\varphi|^2_g
\end{align*}
for any (2,0)-tensor $\varphi\in\bigwedge^2T_xM^{\mathbb{C}}$ and any $x\in M$ in the region $\{\rho(x)\geq e^{-\lambda^2}\}$. 
\end{proposition}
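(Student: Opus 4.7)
The plan is to apply the curvature expansion from Lemma 2.1 to the perturbation $h := \hat g_\lambda - g = \lambda^{-1}\chi(\lambda\rho)\,S$ and bound each correction term. A direct computation shows that $Dh$ has leading factor $\chi'(\lambda\rho)\,d\rho\otimes S$ and $D^2 h$ has leading factor $\lambda\chi''(\lambda\rho)\,d\rho\otimes d\rho\otimes S$, giving $|h|_g = O(\lambda^{-1})$, $|Dh|_g = O(1)$, and $|D^2 h|_g = O(\lambda)$. Hence the linear-in-$h$ term of Lemma 2.1 is $O(\lambda^{-1})$ and harmless, $F$ is $O(1)$, and $E$ splits into an $O(\lambda)$ leading piece plus an $O(1)$ remainder. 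In the outer subregion $\{\rho\geq 1/\lambda\}$ the cutoff $\chi$ is constant, so every contribution is $O(\lambda^{-1})$ and the estimate is immediate; the real work is in the annulus $\{e^{-\lambda^2}\leq\rho\leq 1/\lambda\}$.

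The key observation is that the $O(\lambda)$ leading part of $E$ has the Kulkarni--Nomizu form
\[ E_{\mathrm{top}} = -\tfrac{1}{2}\,\lambda\chi''(\lambda\rho)\,(d\rho\otimes d\rho)\owedge S, \]
a nonnegative multiple of $d\rho^2\owedge S$ since $\chi''\leq 0$. Set $V^j := \varphi^{ij}\rho_i$; by antisymmetry $V\cdot d\rho = 0$, so $V$ is tangent to the level sets of $\rho$. Since $S|_{T(\partial M)} = 2(A_g - A_{\tilde g}) > 0$ strictly by hypothesis, continuity gives $S(V,\bar V)\geq c_0\,|V|_g^2$ in a boundary neighborhood, so $E_{\mathrm{top}}(\varphi,\bar\varphi) \geq 2c_0(-\lambda\chi''(\lambda\rho))\,|V|_g^2$. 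Whenever $|V|_g^2$ is bounded below by a fixed fraction of $|\varphi|_g^2$, this $\Theta(\lambda)$ positive term dominates every $O(1)$ error coming from $F$ and from the subdominant pieces of $E$ once $\lambda$ is large.

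The delicate case is $\varphi$ almost purely tangential to $\partial M$, where $V\approx 0$ and $E_{\mathrm{top}}$ degenerates in the limit. Here I would exhibit an algebraic identity among the $O(1)$ contributions. Restricted to genuinely tangential $\varphi$ at a boundary point, the subleading part of $E$ (from $\chi'(\lambda\rho)\,D^2\rho\otimes S$, using $D^2\rho|_{T(\partial M)} = -A_g$) contributes $\tfrac{\chi'}{2}(A_g\owedge S)$, while the leading part of $F$ (computed from $\Lambda(e_\alpha,e_\beta) = -\tfrac{1}{2}\chi'(\lambda\rho)\,S_{\alpha\beta}\nabla\rho$ for tangential $e_\alpha,e_\beta$) contributes $-\tfrac{(\chi')^2}{8}(S\owedge S)$. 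Substituting $S = 2(A_g - A_{\tilde g})$ yields
\[ \tfrac{\chi'}{2}(A_g\owedge S) - \tfrac{(\chi')^2}{8}(S\owedge S) = \chi'\,(A_g - A_{\tilde g})\owedge\big[(1-\tfrac{\chi'}{2})A_g + \tfrac{\chi'}{2}A_{\tilde g}\big], \]
a Kulkarni--Nomizu product of two positive semidefinite tangential symmetric tensors (since $\chi'(\lambda\rho)\in[0,1]$ and the boundary is convex in both metrics), and therefore nonnegative on tangential two-vectors by Lemma 2.2. The main obstacle I anticipate is the bookkeeping that ties the two cases together: extending the tangential identity off $\partial M$ introduces $O(\rho) = O(\lambda^{-1})$ errors from $D^2\rho + A_g$ and from $|\nabla\rho|^2_{\hat g}-1$; the $O(\lambda^{-1})$ contributions from $DS$, $D^2 S$, and $\hat g^{-1}-g^{-1}$ must be absorbed uniformly; and an interpolation is needed for intermediate $\varphi$ where $|V|$ is neither bounded below nor negligible, so that no two-vector slips through the estimate.
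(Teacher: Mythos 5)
Your plan follows the same basic structure as the paper's proof: expand via Lemma~2.1, isolate the $\Theta(\lambda)$ leading piece of $E$ as a nonnegative multiple of $(d\rho\otimes d\rho)\owedge S$, and then argue that the remaining $O(1)$ pieces of $E$ and $F$ combine to something controllable. Your algebraic identity
\[
\tfrac{\chi'}{2}\,A_g\owedge S - \tfrac{(\chi')^2}{8}\,S\owedge S
= \chi'\,(A_g - A_{\tilde g})\owedge\bigl[(1-\tfrac{\chi'}{2})A_g + \tfrac{\chi'}{2}A_{\tilde g}\bigr]
\]
is correct (using $S=2(A_g-A_{\tilde g})$ on $T(\partial M)$) and is a cleaner route than the paper's. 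The paper instead introduces $T=\Phi^*(A_{\tilde g}+d\rho\otimes d\rho)$ and $\tilde S=S+(\text{const})\,d\rho\otimes d\rho$, writes $S+2D^2\rho+T$ as a $d\rho$-symmetrized term plus $O(\rho)$, and then bounds $\tilde S\owedge\tilde S$ and $T\owedge\tilde S$ separately via Lemma~2.2. Your direct substitution produces the same geometric content (a Kulkarni--Nomizu product of $A_g-A_{\tilde g}$ with a convex combination of $A_g$ and $A_{\tilde g}$, both positive semidefinite on $T(\partial M)$ since $\chi'\in[0,1]$) without the auxiliary tensors, which is a genuine simplification of the $O(1)$ bookkeeping.

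However, the ``interpolation'' you flag at the end is not minor bookkeeping; it is the heart of the estimate, and two pieces of it are genuinely missing. First, $-\chi''(s)\to 0$ as $s\to 1^-$, so your $\Theta(\lambda)$ claim for $E_{\mathrm{top}}$ is \emph{not} uniform on the whole annulus $\{e^{-\lambda^2}\leq\rho\leq 1/\lambda\}$. The paper fixes $s_0\in[0,1)$ close to $1$ so that $\chi'(s_0)\cdot(\text{error coefficient})<\epsilon$, handles $\rho\geq s_0\lambda^{-1}$ by the smallness of $\chi'$ alone, and reserves the lower bound $\inf_{[0,s_0]}(-\chi'')>0$ for the inner annulus $\rho\leq s_0\lambda^{-1}$; splitting at $\rho=1/\lambda$ as you propose does not give this uniform lower bound on $-\chi''$. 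Second, the cross terms (from $DS$, from $D^2\rho+A_g=O(\rho)$, and from dropping the $d\rho$-component of $S$) are of size $O(\chi'|V|)$ rather than $O(\lambda^{-1})$, and the mechanism that absorbs them is a completing-the-square estimate $2a\lambda(-\chi'')|V|^2 - N\chi'|V|\geq -\dfrac{N^2(\chi')^2}{8a\lambda(-\chi'')}$, which \emph{then} becomes $O(\lambda^{-1})$ once $-\chi''$ is bounded away from zero. Without naming this mechanism, the argument does not control two-vectors $\varphi$ for which $|V|$ is small but nonzero, and the proof is incomplete.
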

\bigskip
\begin{proof}
First we fixed a point $x\in M$ so that $\rho(x)\geq e^{-\lambda^2}$. Let $\{e_i\}$ be a geodesic normal frame around $x$. Without loss of generality let us assume that the two-vector  $\varphi\in\bigwedge^2T_xM^{\mathbb{C}}$ is normalized so that $|\varphi|_g=1$. We can write $\varphi=\sum_{i,j}\varphi^{ij}e_i\wedge e_j$ so that $\varphi^{ij}$ is anti-symmetric. Einstein summation will be adopted freely so that $\varphi^{ij}A_{jk}$ means that we are summing over $j=1,..,n$..  In the region $\{\rho(x)\geq e^{-\lambda^2}\}$, we have $\hat{g}_{\lambda}=g+h_{\lambda}$, where
\begin{align*}
    h_{\lambda}=\lambda^{-1}\chi(\lambda\rho)S.
\end{align*}
The tensor $h_{\lambda}$ satisfies
\begin{align}
    (D_{e_j}h_{\lambda})(e_k,e_l)&=\chi'(\lambda\rho)D_j\rho\cdot S_{kl} + \lambda^{-1}\chi(\lambda\rho)D_jS_{kl}
\end{align}
and 
\begin{align} 
    (D^2_{e_i,e_j}h_{\lambda})(e_k,e_l)&= \lambda\chi''(\lambda\rho)D_i\rho D_j\rho\cdot S_{kl} + \chi'(\lambda\rho)D_iD_j\rho\cdot S_{kl}\\
    &\quad +\chi'(\lambda\rho)D_j\rho\cdot D_iS_{kl} + \chi'(\lambda\rho)D_i\rho\cdot D_jS_{kl}\notag\\
    &\quad +\lambda^{-1}\chi(\lambda\rho)D_iD_jS_{kl}\notag.
\end{align}

\bigskip
Since $\varphi$ is a (2,0)-tensor, $\varphi$ induces a linear map $[\varphi]:T_xM^*\to T_xM$ via the action $[\varphi]w := \varphi^{ij}w(e_i)e_j$. Using the notation of Lemma 2.1, we compute

\begin{align*} 
\varphi^{ij} \bar{\varphi}^{kl} E_{ijkl} 
&= -2 \varphi^{ij} \bar{\varphi}^{kl} (D_{i,k}^2 h)_{jl} \\ 
&= -2 \lambda \chi''(\lambda\rho) S([\varphi] d\rho,\overline{[\varphi] d\rho}) - 2 \varphi^{ij} \bar{\varphi}^{kl} \chi'(\lambda\rho) D_i D_k \rho S_{jl} \\ 
&\quad - 2 \varphi^{ij} \bar{\varphi}^{kl} \chi'(\lambda\rho) (D_k \rho D_i S_{jl} + D_i \rho D_k S_{jl}) + O(\lambda^{-1}) 
\end{align*} 
and 
\begin{align*} 
&\varphi^{ij} \bar{\varphi}^{kl} F_{ijkl} \\
&= \frac{1}{2} \varphi^{ij} \bar{\varphi}^{kl} \hat{g}^{pq} (D_j h_{kp}+D_k h_{jp} - D_p h_{jk}) \\ 
&\hspace{25mm} \cdot (D_i h_{lq} + D_l h_{iq} - D_q h_{il}) \\ 
&= \frac{1}{2} \varphi^{ij} \bar{\varphi}^{kl} \chi'(\lambda\rho)^2 \hat{g}^{pq} (D_j \rho S_{kp}+D_k \rho S_{jp} - D_p \rho S_{jk}) \\ 
&\hspace{35mm} \cdot (D_i \rho S_{lq} + D_l \rho S_{iq} - D_q \rho S_{il}) + O(\lambda^{-1}) \\ 
&=  \varphi^{ij} \bar{\varphi}^{kl} \chi'(\lambda\rho)^2 \hat{g}^{pq} \Big [ D_j \rho D_l \rho S_{kp} S_{iq} - \frac{1}{2} D_p \rho D_q \rho S_{ik} S_{jl} \\ 
&\hspace{35mm} + (D_j \rho S_{lp} + D_l \rho S_{jp}) D_q \rho S_{ik} \Big ] + O(\lambda^{-1}).
\end{align*}
Using Lemma 2.1, we obtain 
\begin{align*} 
&\varphi^{ij} \bar{\varphi}^{kl} (R_{\hat{g}_\lambda})_{ijkl} - \varphi^{ij} \bar{\varphi}^{kl} R_{ijkl} \\ 
&= \varphi^{ij} \bar{\varphi}^{kl} E_{ijkl} + \varphi^{ij} \bar{\varphi}^{kl} F_{ijkl} + O(\lambda^{-1}) \\
&= 2 \lambda (-\chi''(\lambda\rho)) S([\varphi] d\rho,\overline{[\varphi] d\rho}) \\ 
&\quad+ \varphi^{ij} \bar{\varphi}^{kl} \chi'(\lambda\rho) (-2 D_i D_k \rho \, S_{jl} - \frac{1}{2} \chi'(\lambda\rho) |D\rho|_{\hat{g}_\lambda}^2 S_{ik} S_{jl}) \\ 
&\quad- 2 \varphi^{ij} \bar{\varphi}^{kl} \chi'(\lambda\rho) (D_i \rho \, D_k S_{jl} + D_k \rho \, D_i S_{jl}) \\ 
&\quad+ \varphi^{ij} \bar{\varphi}^{kl} \chi'(\lambda\rho)^2 \hat{g}^{pq} (D_j \rho S_{lp} + D_l \rho S_{jp}) D_q \rho S_{ik} \\ 
&\quad+ \varphi^{ij} \bar{\varphi}^{kl} \chi'(\lambda\rho)^2 \hat{g}^{pq} D_j \rho D_l \rho S_{kp} S_{iq} + O(\lambda^{-1}). 
\end{align*} 
This gives 
\begin{align} 
&\varphi^{ij} \bar{\varphi}^{kl} (R_{\hat{g}_\lambda})_{ijkl} - \varphi^{ij} \bar{\varphi}^{kl} R_{ijkl} \\ 
&\notag\geq 2 \lambda (-\chi''(\lambda\rho)) S([\varphi] d\rho,\overline{[\varphi] d\rho}) \\ 
&\notag\quad+ \varphi^{ij} \bar{\varphi}^{kl} \chi'(\lambda\rho) (-2 D_i D_k \rho \, S_{jl} - \frac{1}{2} \chi'(\lambda\rho) |D\rho|_{\hat{g}_\lambda}^2 S_{ik} S_{jl}) \\ 
&\notag\quad- C \chi'(\lambda\rho) |[\varphi] d\rho| - C \lambda^{-1}, 
\end{align} 
where $C$ is independent of $\lambda$. By assumption, $A_g - A_{\tilde{g}} > 0$ along the $\partial M$. Therefore, the restriction of $S$ to the tangent space of $\partial M$ is positive definite. Hence, we can find a tensor $\tilde{S}$ such that $\tilde{S}-S$ is a large (but fixed) multiple of $d\rho \otimes d\rho$, and $\tilde{S}$ is positive definite at each point on $\partial M$. Let us fix a small number $a>0$ such that $\tilde{S}-ag$ is positive definite in a small neighborhood of $\partial M$. This implies 
\[\tilde{S}([\varphi] d\rho,\overline{[\varphi] d\rho}) \geq a \, |[\varphi] d\rho|^2\] 
in a neighborhood of $\partial M$. On the other hand, since $[\varphi]$ is anti-symmetric, we have $([\varphi] d\rho)^i D_i \rho = \varphi^{ij} D_j \rho D_i \rho = 0$. In other words, the vector $[\varphi] d\rho$ is annihilated by the one-form $d\rho$. Since $\tilde{S}-S$ is a multiple of $d\rho \otimes d\rho$, it follows that 
\begin{align}
	S([\varphi] d\rho,\overline{[\varphi] d\rho}) = \tilde{S}([\varphi] d\rho,\overline{[\varphi] d\rho}) \geq a \, |[\varphi] d\rho|^2
\end{align}
in a neighborhood of $\partial M$. The above estimate with (5) give

\begin{align} 
&\varphi^{ij} \bar{\varphi}^{kl} (R_{\hat{g}_\lambda})_{ijkl} - \varphi^{ij} \bar{\varphi}^{kl} R_{ijkl} \\ 
&\notag\geq 2a \lambda (-\chi''(\lambda\rho)) \, |[\varphi] d\rho|^2 \\ 
&\notag\quad+ \varphi^{ij} \bar{\varphi}^{kl} \chi'(\lambda\rho) (-2 D_i D_k \rho \, S_{jl} - \frac{1}{2} \chi'(\lambda\rho) |D\rho|_{\hat{g}_\lambda}^2 S_{ik} S_{jl}) \\ 
&\notag\quad- C \chi'(\lambda\rho) |[\varphi] d\rho| - C \lambda^{-1}. 
\end{align} 
Since $\chi(0)=0$, we can find a real number $s_0\in[0,1)$ such that
\begin{align}
\chi'(s_0) \sup_U\Big|\varphi^{ij} \bar{\varphi}^{kl}  (2 D_i D_k \rho \, S_{jl} + \frac{1}{2} \chi'(\lambda\rho) |D\rho|_{\hat{g}_\lambda}^2 S_{ik} S_{jl}) + C |[\varphi] d\rho| \Big| < \epsilon.
\end{align}
In the region $\{\rho\geq s_0\lambda^{-1}\}$, we have
\begin{align*} 
&\varphi^{ij} \bar{\varphi}^{kl} (R_{\hat{g}_\lambda})_{ijkl} - \varphi^{ij} \bar{\varphi}^{kl} R_{ijkl} \\ 
&\notag\geq 2a \lambda (-\chi''(\lambda\rho)) \, |[\varphi] d\rho|^2 \\ 
&\notag\quad- \sup_{s\geq s_0}\chi'(s) \sup_U\Big|\varphi^{ij} \bar{\varphi}^{kl}  (2 D_i D_k \rho \, S_{jl} + \frac{1}{2} \chi'(\lambda\rho) |D\rho|_{\hat{g}_\lambda}^2 S_{ik} S_{jl}) + C |[\varphi] d\rho| \Big|  - C \lambda^{-1}\\
&\notag = 2a \lambda (-\chi''(\lambda\rho)) \, |[\varphi] d\rho|^2 \\ 
&\notag\quad- \chi'(s_0) \sup_U\Big|\varphi^{ij} \bar{\varphi}^{kl}  (2 D_i D_k \rho \, S_{jl} + \frac{1}{2} \chi'(\lambda\rho) |D\rho|_{\hat{g}_\lambda}^2 S_{ik} S_{jl}) + C |[\varphi] d\rho| \Big|  - C \lambda^{-1}. 
\end{align*} 
Thus it follows from (8) that 
\begin{align*}
    \inf_{\rho\geq s_0\lambda^{-1}}(Rm_{\hat{g}_{\lambda}}(\varphi,\bar{\varphi})-Rm_{g}(\varphi,\bar{\varphi}))\geq -\epsilon
\end{align*}
if $\lambda>0$ is sufficiently large. 
\bigskip

We next consider the region $\{e^{-\lambda^2} \leq \rho \leq s_0\lambda^{-1}\}$. We can find a neighborhood $V$ of $\partial M$ and a diffeomorphism $\Phi$ such that $\Phi:V\cong \partial M\times [0,\delta)$. Let $T$ be a smooth (0,2)-tensor in a neighborhood of $\partial M$ defined by $T = \Phi^*(A_{\tilde{g}} + d\rho\otimes d\rho)$. Thus $T$ is weakly positive definite in a neighborhood of $\partial M$.  We observe by (1) that the restriction of $S_{ik} + 2 D_i D_k \rho + T_{ik}$ to the tangent space to $\partial M$ vanishes. Therefore, we may write $S_{ik} + 2 D_i D_k \rho + T_{ik} = \omega_i D_k \rho + \omega_k D_i \rho$ at each point on $\partial M$, where $\omega$ is a suitable $1$-form. Hence, in a small neighborhood of the boundary, we have $|S_{ik} + 2 D_i D_k \rho + T_{ik} - \omega_i D_k \rho - \omega_k D_i \rho| \leq C \rho$. This implies 
\begin{align*} 
|\varphi^{ij} \bar{\varphi}^{kl} (S_{ik}+2 D_iD_k \rho + T_{ik}) S_{jl}| 
&\leq C |[\varphi] d\rho| + C \rho \\ 
&\leq C |[\varphi] d\rho| + C \lambda^{-1} 
\end{align*} 
in the region $\{e^{-\lambda^2} \leq \rho \leq s_0\lambda^{-1}\}$. Putting these facts with (7) together, we obtain 
\begin{align*} 
&\varphi^{ij} \bar{\varphi}^{kl} (R_{\hat{g}_\lambda})_{ijkl} - \varphi^{ij} \bar{\varphi}^{kl} R_{ijkl} \\ 
&\geq 2a \lambda (-\chi''(\lambda\rho)) \, |[\varphi] d\rho|^2 \\ 
&\quad + \varphi^{ij} \bar{\varphi}^{kl} \chi'(\lambda\rho) (1 - \frac{1}{2} \chi'(\lambda\rho) |D\rho|_{\hat{g}_\lambda}^2) S_{ik} S_{jl} \\ 
&\quad + \varphi^{ij} \bar{\varphi}^{kl} \chi'(\lambda\rho) T_{ik} S_{jl} - C \chi'(\lambda\rho) |[\varphi] d\rho| - C \lambda^{-1} 
\end{align*} 
in the region $\{e^{-\lambda^2} \leq \rho \leq s_0\lambda^{-1}\}$. 

Next, we may assume without loss of generality that the frame $\{e_i\}$ diagonalizes the (0,2)-tensor $\tilde{S}$. Since $\tilde{S}-ag$ is positive definite and $T$ is weakly positive definite, we have
\begin{align*}
	\varphi^{ij}\bar{\varphi}^{kl}T_{ik}\tilde{S}_{jl} \geq 0
\end{align*}
in a neighborhood of $\partial M$ by Lemma 2.2. Since $\tilde{S}-S$ is a multiple of $d\rho \otimes d\rho$, it follows that
\begin{align*}
		\varphi^{ij}\bar{\varphi}^{kl}T_{ik}S_{jl} \geq \varphi^{ij}\bar{\varphi}^{kl}T_{ik}\tilde{S}_{jl} -  C\, |[\varphi] d\rho| \geq  - C \, |[\varphi] d\rho|
\end{align*}
in a neighborhood of $\partial M$.

On the other hand, since $\tilde{S}-ag$ is positive definite, we know that 
\[\varphi^{ij} \bar{\varphi}^{kl} \tilde{S}_{ik} \tilde{S}_{jl} \geq a^2 |\varphi|^2 = a^2\] 
in a neighborhood of $\partial M$. Since $\tilde{S}-S$ is a multiple of $d\rho \otimes d\rho$, it follows that 
\begin{align*}
	\varphi^{ij} \bar{\varphi}^{kl} S_{ik} S_{jl} \geq \varphi^{ij} \bar{\varphi}^{kl} \tilde{S}_{ik} \tilde{S}_{jl}  - C \, |[\varphi] d\rho| \geq a^2 - C \, |[\varphi] d\rho|
\end{align*}
in a neighborhood of $\partial M$. To summarize, we obtain  
\begin{align} 
&\varphi^{ij} \bar{\varphi}^{kl} (R_{\hat{g}_\lambda})_{ijkl} - \varphi^{ij} \bar{\varphi}^{kl} R_{ijkl} \\ 
&\notag\geq 2a \lambda (-\chi''(\lambda\rho)) \, |[\varphi] d\rho|^2 + a^2 \chi'(\lambda\rho) (1 - \frac{1}{2} \chi'(\lambda\rho) |D\rho|_{\hat{g}_\lambda}^2) \\ 
&\notag\quad- N \chi'(\lambda\rho) |[\varphi] d\rho| - N \lambda^{-1}\\
&\notag = |[\varphi] d\rho| \Big[ 2a \lambda (-\chi''(\lambda\rho)) \, |[\varphi] d\rho| - N \chi'(\lambda\rho) \Big]\\
&\notag\quad + a^2 \chi'(\lambda\rho) (1 - \frac{1}{2} \chi'(\lambda\rho) |D\rho|_{\hat{g}_\lambda}^2) - N\lambda^{-1}
\end{align} 
in the region $\{e^{-\lambda^2} \leq \rho \leq s_0\lambda^{-1}\}$.  Here, $N$ is a positive constant depending only on $(M,g)$ but independent of $\lambda$. This implies that
\begin{align*}
    Rm_{\hat{g}_{\lambda}}(\varphi,\bar{\varphi})-Rm_{g}(\varphi,\bar{\varphi})\geq -\frac{N^2\chi'(\lambda\rho)^2}{8a\lambda\cdot\inf_{0\leq s\leq s_0}(-\chi''(s))}+ a^2 \chi'(\lambda\rho) (1 - \frac{1}{2} |D\rho|_{\hat{g}_\lambda}^2) -N\lambda^{-1}.
\end{align*}
Here we have used $x(Ax-B)\geq -\frac{B^2}{4A}$ for positive $A$ and the fact that $\chi$ is between $0$ and $1$. Since $|D\rho|_{\hat{g}_{\lambda}} = |D\rho|_g = 1$ on the boundary $\partial M$, by continuity we have $(1 - \frac{1}{2} |D\rho|_{\hat{g}_\lambda}^2)>\frac{1}{4}$ in the region $\{e^{-\lambda^2} \leq \rho \leq s_0\lambda^{-1}\}$ if $\lambda$ is sufficiently large. By the definition of $\chi$, we also have $\inf_{0\leq s\leq s_0}(-\chi''(s))>0$. Thus, we conclude that

\begin{align*}
    \inf_{e^{-\lambda^2}\leq \rho\leq s_0\lambda^{-1}}(Rm_{\hat{g}_{\lambda}}(\varphi,\bar{\varphi})-Rm_{g}(\varphi,\bar{\varphi}))\geq \frac{1}{4}a^2\chi'(\lambda\rho)
\end{align*}
if $\lambda$ is sufficiently large. Combing these facts, we complete the proof.
\end{proof}

\bigskip

\begin{corollary}
 Suppose that $(M,g)$ has a two-convex boundary and that $(M, \tilde{g})$ has a  weakly two-convex boundary such that 
	$$ A_g(X,X)+A_g(Y,Y) > A_{\tilde{g}}(X,X) + A_{\tilde{g}}(Y,Y) \geq 0 $$
	for all orthonormal $X,Y\in T(\partial M)$. Let $\epsilon$ be an arbitrary positive real number. If $\lambda>0$ is sufficiently large, then 
\begin{align*}
    Rm_{\hat{g}_{\lambda}}(x)(\varphi,\bar{\varphi})-Rm_{g}(x)(\varphi,\bar{\varphi})\geq -\epsilon|\varphi|_g^2
\end{align*}
for any isotropic 2-vector $\varphi=z\wedge w\in\bigwedge^2T_xM^{\mathbb{C}}$ satisfying $\hat{g}_{\lambda}(z,z)=\hat{g}_{\lambda}(w,w)=\hat{g}_{\lambda}(z,w)=0$ and any $x\in M$ in the region $\{\rho(x)\geq e^{-\lambda^2}\}$. 
\end{corollary}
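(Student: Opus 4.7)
The plan is to follow the proof of Proposition~3.1 verbatim, replacing each application of convexity-based full positivity by a two-convexity-based two-positivity argument, and using Lemma~2.3 in place of Lemma~2.2. The algebraic reduction from Lemma~2.1 to inequality~(5) depends only on the perturbation structure $\hat g_\lambda = g + \lambda^{-1}\chi(\lambda\rho)S$ and not on the curvature condition, so it carries over unchanged. After (5), one needs lower bounds on three quantities: $S([\varphi]d\rho, \overline{[\varphi]d\rho})$, $\varphi^{ij}\bar\varphi^{kl} T_{ik} S_{jl}$, and $\varphi^{ij}\bar\varphi^{kl} S_{ik} S_{jl}$, where $T = \Phi^*(A_{\tilde g} + d\rho \otimes d\rho)$.

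Since $A_g - A_{\tilde g}$ is strictly two-positive on $T(\partial M)$, the restriction of $S$ to $T(\partial M)$ is strictly two-positive. Adding a sufficiently large fixed multiple of $d\rho \otimes d\rho$ to $S$ produces a tensor $\tilde S$ and a constant $a > 0$ such that $\tilde S - a g$ is \emph{two-positive} (rather than positive definite) in a neighborhood of $\partial M$. The key observation for the first of the three quantities is that $[\varphi]d\rho = d\rho(z)\,w - d\rho(w)\,z$ lies in the totally isotropic complex $2$-plane $\text{span}_{\mathbb C}\{z, w\}$ and is therefore itself an isotropic vector. Writing $[\varphi]d\rho = f + ig$ with real $f \perp g$ of equal length, two-positivity of $\tilde S - ag$ yields
\[
\tilde S([\varphi]d\rho, \overline{[\varphi]d\rho}) = \tilde S(f,f) + \tilde S(g,g) \geq a\bigl(|f|^2 + |g|^2\bigr) = a\,|[\varphi]d\rho|^2.
\]
Since $\tilde S - S$ is a multiple of $d\rho \otimes d\rho$ and $d\rho([\varphi]d\rho) = 0$, the same bound holds with $\tilde S$ replaced by $S$.

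For the remaining two quantities, weak two-convexity of $\tilde g$ makes $T$ weakly two-positive, and by Lemma~2.3 both $\tilde S \owedge T$ and $\tilde S \owedge \tilde S$ are weakly PIC with respect to $g$. Applied to the isotropic $2$-vector $\varphi$, the first gives $\varphi^{ij}\bar\varphi^{kl} \tilde S_{ik} T_{jl} \geq 0$; for the second, the adapted-frame computation from the proof of Lemma~2.3 (choose $z, w$ within their span so that $\tilde S(z, \bar w) = 0$, then use $\tilde S(z, \bar z),\, \tilde S(w, \bar w) \geq a$ by two-positivity) delivers a quantitative bound $\varphi^{ij}\bar\varphi^{kl} \tilde S_{ik} \tilde S_{jl} \geq c_0$ for some $c_0 > 0$ depending only on $a$. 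Passing from $\tilde S$ to $S$ costs $C\,|[\varphi]d\rho|$ in each case. With these three substitutions in place, the remainder of the proof --- the split into the regions $\{\rho \geq s_0 \lambda^{-1}\}$ and $\{e^{-\lambda^2} \leq \rho \leq s_0 \lambda^{-1}\}$ and the application of $x(Ax - B) \geq -B^2/(4A)$ --- goes through identically to Proposition~3.1.

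The main technical nuisance I anticipate is that the isotropy hypothesis in the corollary is with respect to $\hat g_\lambda$, whereas Lemma~2.3 and the isotropy argument for $[\varphi]d\rho$ above use isotropy with respect to the background metric $g$. Since $\hat g_\lambda \to g$ in $C^0$ on the relevant region, an $\hat g_\lambda$-isotropic pair $(z,w)$ fails to be $g$-isotropic by an amount controlled by $|h_\lambda|_g$, and the induced discrepancy in all of the estimates above is of order $\chi'(\lambda\rho)\,|[\varphi] d\rho| + \lambda^{-1}$; this is absorbed into the corresponding error terms already present in (5), provided $\lambda$ is sufficiently large.
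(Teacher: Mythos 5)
Your proposal follows essentially the same route as the paper's: extend $S$ to a two-positive $\tilde S$ by adding a fixed multiple of $d\rho\otimes d\rho$, use two-positivity directly to bound $S([\varphi]d\rho,\overline{[\varphi]d\rho})$ from below by $a\,|[\varphi]d\rho|^2$ via the observation that $[\varphi]d\rho$ is an isotropic vector, and invoke Lemma~2.3 for the Kulkarni--Nomizu terms; the split into $\{\rho\geq s_0\lambda^{-1}\}$ and $\{e^{-\lambda^2}\leq\rho\leq s_0\lambda^{-1}\}$ and the completing-the-square step at the end are the same. The one place you genuinely diverge is in reconciling $\hat g_\lambda$-isotropy (the hypothesis on $\varphi$) with $g$-two-positivity/PIC (what Lemma~2.3 produces): you do everything with respect to $g$ and then estimate the discrepancy, whereas the paper inserts explicit buffers --- it chooses $\tilde S - 2ag$ two-positive with respect to $g$ so that $\tilde S - ag$ is two-positive with respect to $\hat g_\lambda$ for large $\lambda$, and similarly requires $\tilde S\owedge\tilde S - 5b\,g\owedge g$ PIC with respect to $g$ so that $\tilde S\owedge\tilde S - 4b\,g\owedge g$ and $T\owedge\tilde S + b\,g\owedge g$ are PIC with respect to $\hat g_\lambda$ --- and then never needs a discrepancy estimate at all. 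Your version does work, but your stated order of the discrepancy, $\chi'(\lambda\rho)|[\varphi]d\rho| + \lambda^{-1}$, is not quite the right shape for the first estimate: the $g$-isotropy defect of $[\varphi]d\rho$ is $O(\lambda^{-1})|[\varphi]d\rho|^2$, so after being multiplied by the $2\lambda(-\chi'')$ prefactor in (5) it contributes $O(1)|[\varphi]d\rho|^2$, which is absorbed not by the $C\lambda^{-1}$ tail but by the leading $a\lambda(-\chi'')|[\varphi]d\rho|^2$ term (taking $\lambda$ large relative to $a^{-1}$). That accounting is exactly what the paper's buffer convention is designed to make automatic, so while both versions are correct the buffer argument is noticeably cleaner to quantify.
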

\bigskip
\begin{proof}
As in Proposition 3.1, we recall (5):
\begin{align*} 
&\varphi^{ij} \bar{\varphi}^{kl} (R_{\hat{g}_\lambda})_{ijkl} - \varphi^{ij} \bar{\varphi}^{kl} R_{ijkl} \\ 
&\notag\geq 2 \lambda (-\chi''(\lambda\rho)) S([\varphi] d\rho,\overline{[\varphi] d\rho}) \\ 
&\notag\quad+ \varphi^{ij} \bar{\varphi}^{kl} \chi'(\lambda\rho) (-2 D_i D_k \rho \, S_{jl} - \frac{1}{2} \chi'(\lambda\rho) |D\rho|_{\hat{g}_\lambda}^2 S_{ik} S_{jl}) \\ 
&\notag\quad- C \chi'(\lambda\rho) |[\varphi] d\rho| - C \lambda^{-1}.
\end{align*} 
By assumption, the difference $A_g - A_{\tilde{g}}$ is strictly $2$-positive on $\partial M$. Therefore, the restriction of $S$ to the tangent space of $\partial M$ is strictly $2$-positive (with respect to the metric $g$). Hence, we can find a tensor $\tilde{S}$ such that $\tilde{S}-S$ is a large (but fixed) multiple of $d\rho \otimes d\rho$, and $\tilde{S}$ is strictly $2$-positive (with respect to $g$) at each point on $\partial M$. Let us fix a small number $a>0$ such that $\tilde{S}-2ag$ is $2$-positive (with respect to $g$) in a small neighborhood of $\partial M$. Then $\tilde{S}-ag$ is $2$-positive with respect to $\hat{g}_\lambda$, if $\lambda$ is sufficiently large. Since $\varphi$ is an isotropic $2$-vector with respect to $\hat{g}_\lambda$, we know that $[\varphi] d\rho$ is an isotropic vector with respect to $\hat{g}_\lambda$. Since $\tilde{S}-ag$ is $2$-positive with respect to $\hat{g}_\lambda$, we obtain 
\[\tilde{S}([\varphi] d\rho,\overline{[\varphi] d\rho}) \geq a \, |[\varphi] d\rho|^2\] 
in a neighborhood of $\partial M$. On the other hand, since $[\varphi]$ is anti-symmetric, we have $([\varphi] d\rho)^i D_i \rho = \varphi^{ij} D_i \rho D_j \rho = 0$. In other words, the vector $[\varphi] d\rho$ is annihilated by the $1$-form $d\rho$. Since $\tilde{S}-S$ is a multiple of $d\rho \otimes d\rho$, it follows that 
\[S([\varphi] d\rho,\overline{[\varphi] d\rho}) = \tilde{S}([\varphi] d\rho,\overline{[\varphi] d\rho}) \geq a \, |[\varphi] d\rho|^2\] 
in a neighborhood of $\partial M$. Then we proceed as in Proposition 3.1, we can find a real number $s_0\in (0,1)$ so that the estimate 
\begin{align*}
    \inf_{\rho\geq s_0\lambda^{-1}}(Rm_{\hat{g}_{\lambda}}(\varphi,\bar{\varphi})-Rm_{g}(\varphi,\bar{\varphi}))\geq -\epsilon
\end{align*}
holds if $\lambda > 0$ is sufficiently large.

Next, proceed as in Proposition 3.1, we have
\begin{align*} 
&\varphi^{ij} \bar{\varphi}^{kl} (R_{\hat{g}_\lambda})_{ijkl} - \varphi^{ij} \bar{\varphi}^{kl} R_{ijkl} \\ 
&\geq 2a \lambda (-\chi''(\lambda\rho)) \, |[\varphi] d\rho|^2 \\ 
&\quad+ \varphi^{ij} \bar{\varphi}^{kl} \chi'(\lambda\rho) (1 - \frac{1}{2} \chi'(\lambda\rho) |D\rho|_{\hat{g}_\lambda}^2) S_{ik} S_{jl} \\ 
&\quad + \varphi^{ij} \bar{\varphi}^{kl} \chi'(\lambda\rho) T_{ik} S_{jl} - C \chi'(\lambda\rho) |[\varphi] d\rho| - C \lambda^{-1} 
\end{align*} 
in the region $\{e^{-\lambda^2} \leq \rho \leq s_0\lambda^{-1}\}$, where the (0,2)-tensor $T$ is defined in the same way as in Proposition 3.1. In this case, $T$ is weakly 2-positive as $A_{\tilde{g}}$ does. Since $\tilde{S}$ is $2$-positive with respect to the metric $g$, by Lemma 2.3 we know that the Kulkarni-Nomizu products $\tilde{S} \owedge \tilde{S}$ and $T \owedge \tilde{S}$ are strictly PIC and weakly PIC respectively with respect to the metric $g$. Let us fix a small number $b>0$ so that $\tilde{S} \owedge \tilde{S} - 5b \, g \owedge g$ is PIC with respect to the metric $g$. Hence, if $\lambda$ is sufficiently large, then $\ \tilde{S} \owedge \tilde{S} - 4b \, g \owedge g$ and $\ T \owedge \tilde{S} + b \, g \owedge g$ are PIC with respect to the metric $\hat{g}_\lambda$. Since $\varphi$ is an isotropic $2$-vector with respect to $\hat{g}_\lambda$, it follows that 
\[\varphi^{ij} \bar{\varphi}^{kl} \tilde{S}_{ik} \tilde{S}_{jl} \geq 4b |\varphi|^2 = 4b\] 
and 
\[\varphi^{ij} \bar{\varphi}^{kl} T_{ik} \tilde{S}_{jl} \geq -b |\varphi|^2 = -b.\]

Since $\tilde{S}-S$ is a multiple of $d\rho \otimes d\rho$, we obtain 
\[\varphi^{ij} \bar{\varphi}^{kl} S_{ik} S_{jl} \geq \varphi^{ij} \bar{\varphi}^{kl} \tilde{S}_{ik} \tilde{S}_{jl}  - C \, |[\varphi] d\rho| \geq 4b - C \, |[\varphi] d\rho|\] 
and
\[\varphi^{ij} \bar{\varphi}^{kl} T_{ik} S_{jl} \geq \varphi^{ij} \bar{\varphi}^{kl} T_{ik} \tilde{S}_{jl}  - C \, |[\varphi] d\rho| \geq -b - C \, |[\varphi] d\rho|\] 
in a neighborhood of $\partial M$. To summarize, we find 
\begin{align*} 
&\varphi^{ij} \bar{\varphi}^{kl} (R_{\hat{g}_\lambda})_{ijkl} - \varphi^{ij} \bar{\varphi}^{kl} R_{ijkl} \\ 
&\geq 2a \lambda (-\chi''(\lambda\rho)) \, |[\varphi] d\rho|^2 + b \chi'(\lambda\rho) (3 - 2 \chi'(\lambda\rho) |D\rho|_{\hat{g}_\lambda}^2) \\ 
&\quad- N \chi'(\lambda\rho) |[\varphi] d\rho| - N \lambda^{-1}
\end{align*} 
in the region $\{e^{-\lambda^2} \leq \rho \leq s_0\lambda^{-1}\}$. The assertion then follows as in Proposition 3.1.

\end{proof}

\bigskip
\bigskip
\bigskip
Next, we estimate the curvature operator of $\hat{g}_{\lambda}$ in the region $\{\rho<e^{-\lambda^2}\}$.
\bigskip
\begin{proposition}
 Suppose that $(M,g)$ has a convex boundary and that $(M, \tilde{g})$ has a  weakly convex boundary such that $A_g > A_{\tilde{g}} \geq 0$. Let $\epsilon$ be an arbitrary positive real number. If $\lambda>0$ is sufficientlt large, then 
\begin{align*}
    Rm_{\hat{g}_{\lambda}}(x)(\varphi,\bar{\varphi})-Rm_{\tilde{g}}(x)(\varphi,\bar{\varphi})\geq -\epsilon|\varphi|^2_{\tilde{g}}
\end{align*}
for any (2,0)-tensor $\varphi\in\bigwedge^2T_xM^{\mathbb{C}}$ and any $x\in M$ in the region $\{\rho(x)< e^{-\lambda^2}\}$. 
\end{proposition}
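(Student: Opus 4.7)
The strategy is to apply Lemma 2.1 with $\tilde{g}$ playing the role of the background metric $g$. In the region $\{\rho<e^{-\lambda^2}\}$ we have $\hat{g}_\lambda = \tilde{g} + h_\lambda$ with
\[h_\lambda = u(\rho)\,S, \qquad u(\rho):=-\lambda\rho^2\,\beta(\lambda^{-2}\log\rho).\]
Differentiating and using that $\beta,\beta',\beta''$ are bounded, one obtains $u(\rho)=O(\lambda\rho^2)$, $u'(\rho)=-2\lambda\rho\,\beta(\lambda^{-2}\log\rho)+O(\lambda^{-1}\rho)$, and
\[u''(\rho)=-2\lambda\,\beta(\lambda^{-2}\log\rho)+O(\lambda^{-1}).\]
Since $\rho\leq e^{-\lambda^2}$ throughout this region, the first two give $|h_\lambda|_{\tilde{g}}=O(\lambda e^{-2\lambda^2})$ and $|\tilde{D}h_\lambda|_{\tilde{g}}=O(\lambda e^{-\lambda^2})$. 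Consequently the curvature-linear correction $\tfrac12\,\tilde g^{pq}\tilde R_{ijkp}(h_\lambda)_{ql}$ (and its partner) and the tensor $F$ of Lemma 2.1 (which is quadratic in $\tilde{D}h_\lambda$) are each $o(1)$ as $\lambda\to\infty$. The only non-negligible contribution to $\tilde{D}^2 h_\lambda$ is the leading piece
\[(\tilde{D}^2_{i,k}h_\lambda)_{jl}=u''(\rho)\,D_i\rho\,D_k\rho\,S_{jl}+o(1),\]
since every sub-leading term in $\tilde{D}^2 h_\lambda$ carries an extra factor of $\rho$ and is therefore $O(\lambda\rho)=o(1)$.

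I would then contract the tensor $E$ from Lemma 2.1 with $\varphi^{ij}\bar{\varphi}^{kl}$ and exploit the antisymmetries of $\varphi$ (exactly as in the derivation of equation (5) in the proof of Proposition 3.1): the four summands of $E$ collapse to
\[\varphi^{ij}\bar{\varphi}^{kl}E_{ijkl}=-2u''(\rho)\,S([\varphi]d\rho,\overline{[\varphi]d\rho})+o(1)=4\lambda\,\beta(\lambda^{-2}\log\rho)\,S([\varphi]d\rho,\overline{[\varphi]d\rho})+O(\lambda^{-1}).\]
To handle this term I invoke the same auxiliary tensor $\tilde{S}$ constructed in Proposition 3.1: since $A_g>A_{\tilde{g}}$ on $\partial M$, the restriction of $S$ to $T(\partial M)$ is positive definite, so one may choose $\tilde{S}$ with $\tilde{S}-S$ a (large) constant multiple of $d\rho\otimes d\rho$ such that $\tilde{S}-ag$ is positive definite in a neighborhood of $\partial M$ for some small $a>0$. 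Because $\varphi^{ij}$ is antisymmetric, $[\varphi]d\rho$ is annihilated by $d\rho$, hence
\[S([\varphi]d\rho,\overline{[\varphi]d\rho})=\tilde{S}([\varphi]d\rho,\overline{[\varphi]d\rho})\geq a\,|[\varphi]d\rho|^2\geq 0.\]

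Combined with $\beta\geq 0$, this shows that the $O(\lambda)$-sized leading piece of $E$ is non-negative, and collecting all the estimates above yields
\[Rm_{\hat{g}_\lambda}(\varphi,\bar{\varphi})-Rm_{\tilde{g}}(\varphi,\bar{\varphi})\geq -C\lambda^{-1}-C\lambda e^{-\lambda^2}\geq -\epsilon\,|\varphi|_{\tilde{g}}^2\]
after normalizing $|\varphi|_{\tilde{g}}=1$ and taking $\lambda$ sufficiently large. The main obstacle is precisely the sign of the $\lambda$-sized term $-2u''(\rho)\,S([\varphi]d\rho,\overline{[\varphi]d\rho})$: were it negative, the curvature estimate would fail outright. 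The combination of the cutoff choice (which forces $u''=-2\lambda\beta+O(\lambda^{-1})\leq 0$ via $\beta\geq 0$) and the hypothesis $A_g>A_{\tilde{g}}\geq 0$ (which via the $\tilde{S}$-trick yields $S([\varphi]d\rho,\overline{[\varphi]d\rho})\geq 0$) is exactly what is needed to make this dominant term helpful rather than harmful. Everything else is routine bookkeeping that exploits the fact that $\rho$ is exponentially small in $\lambda$ throughout $\{\rho<e^{-\lambda^2}\}$, so any quantity that grows only polynomially in $\lambda$ is suppressed.
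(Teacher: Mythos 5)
Your proposal is correct and follows essentially the same route as the paper: write $\hat g_\lambda=\tilde g+\tilde h_\lambda$ with $\tilde h_\lambda=-\lambda\rho^2\beta(\lambda^{-2}\log\rho)S$, observe that the first derivative of $\tilde h_\lambda$ is $O(\lambda^{-1})$ while the second derivative has the single $O(\lambda)$-sized piece $-2\lambda\beta(\lambda^{-2}\log\rho)\,D_i\rho\,D_j\rho\,S_{kl}$, contract with $\varphi^{ij}\bar\varphi^{kl}$ via Lemma 2.1 to obtain a dominant term of the form $\mathrm{const}\cdot\lambda\,\beta(\lambda^{-2}\log\rho)\,S([\varphi]d\rho,\overline{[\varphi]d\rho})$, and conclude by noting that this quantity is nonnegative near $\partial M$ while all remaining terms are $O(\lambda^{-1})$. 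The one place you are actually slightly more careful than the paper is the nonnegativity of $S([\varphi]d\rho,\overline{[\varphi]d\rho})$: the paper merely says ``by continuity,'' whereas you explicitly invoke the $\tilde S=S+C\,d\rho\otimes d\rho$ trick from Proposition 3.1 together with the fact that $[\varphi]d\rho$ is annihilated by $d\rho$, which is the argument needed to make the continuity step rigorous. (You find a leading coefficient $4\lambda\beta$ while the paper displays $2\lambda\beta$; the paper's constant appears to be a minor slip, and it has no bearing on the argument since only the sign matters.)
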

\bigskip

\begin{proof}
In the region $\{\rho<e^{-\lambda^2}\}$, we have $\hat{g}_{\lambda}=\tilde{g}+\tilde{h}_{\lambda}$, where $\tilde{h}_{\lambda}$ is defined by 
\begin{align*}
    \tilde{h}_{\lambda} = -\lambda\rho^2\beta(\lambda^{-2}\log\rho)S.
\end{align*}

Let $\{e_i\}$ be a geodesic normal frame around $x$. And without loss of generality let  $\varphi\in\bigwedge^2T_xM^{\mathbb{C}}$ be a normalized (2,0)-tensor, we can write $\varphi=\sum_{i<j}\varphi^{ij}e_i\wedge e_j$. Then we have

\begin{align}
    (\tilde{D}_{e_j}\tilde{h}_{\lambda})(e_k,e_l)&=-\big[2\lambda\rho\beta(\lambda^{-2}\log\rho)+\lambda^{-1}\rho\beta'(\lambda^{-2}\log\rho)\big]D_j\rho\cdot S_{kl}\\
    &\quad - \lambda\rho^2\beta(\lambda^{-2}\log\rho)D_jS_{kl}\notag\\
    &= O(\lambda^{-1}) \notag
\end{align}
and 
\begin{align} 
    (\tilde{D}^2_{e_i,e_j}\tilde{h}_{\lambda})(e_k,e_l)&= -\big[2\lambda\beta(\lambda^{-2}\log\rho)+3\lambda^{-1}\beta'(\lambda^{-2}\log\rho)+\lambda^{-3}\beta''(\lambda^{-2}\log\rho)\big]D_i\rho D_j\rho\cdot S_{kl} \\
    &\quad -\big[2\lambda\rho\beta(\lambda^{-2}\log\rho) + \lambda^{-1}\rho\beta'(\lambda^{-2}\log\rho) \big]D_iD_j\rho\cdot S_{kl}    \notag\\
    &\quad -\big[2\lambda\rho\beta(\lambda^{-2}\log\rho) + 2\lambda^{-1}\rho\beta'(\lambda^{-2}\log\rho)\big]D_j\rho\cdot D_iS_{kl} \notag\\
    &\quad  
    - \big[2\lambda\rho\beta(\lambda^{-2}\log\rho) + 2\lambda^{-1}\rho\beta'(\lambda^{-2}\log\rho)\big]D_i\rho\cdot D_jS_{kl}\notag\\
    &\quad -\lambda\rho^2\beta(\lambda^{-2}\log\rho)D_iD_jS_{kl}\notag\\
    &= -2\lambda\beta(\lambda^{-2}\log\rho)D_i\rho D_j\rho\cdot S_{kl} + O(\lambda^{-1}). \notag 
\end{align}

Using Lemma 2.1, we obtain 
\begin{align}
    Rm_{\hat{g}_{\lambda}}(\varphi,\bar{\varphi})-Rm_{g}(\varphi,\bar{\varphi}) \geq\ &  2\lambda\beta(\lambda^{-2}\log\rho)S([\varphi]d\rho,\overline{[\varphi]d\rho}) - L\lambda^{-1}.
\end{align}
Here, $L$ is a positive constant independent of $\lambda$.
\bigskip

Recall that $S(X,X)>0$ for all $X\in T(\partial M)^{\mathbb{C}}$. By continuity, we have

\begin{align*}
    S([\varphi]d\rho,\overline{[\varphi]d\rho})\geq 0
\end{align*}

in a neighborhood of $\partial M$. Hence, if $\lambda>0$ is sufficiently large, then we have
\begin{align*}
    \inf_{\rho<e^{-\lambda^2}}(Rm_{\hat{g}_{\lambda}}(\varphi,\bar{\varphi})-Rm_{\tilde{g}}(\varphi,\bar{\varphi}))\geq -\epsilon.
\end{align*}
From this, the assertion follows.

\end{proof}
\bigskip

Following the same line of the above Proposition and Corollary 3.2, we also have

\begin{corollary}
 Suppose that $(M,g)$ has a two-convex boundary and that $(M, \tilde{g})$ has a  weakly two-convex boundary such that 
	$$ A_g(X,X)+A_g(Y,Y) > A_{\tilde{g}}(X,X) + A_{\tilde{g}}(Y,Y) \geq 0 $$
	for all orthonormal $X,Y\in T(\partial M)$. Let $\epsilon$ be an arbitrary positive real number. If $\lambda>0$ is sufficientlt large, then 
\begin{align*}
    Rm_{\hat{g}_{\lambda}}(x)(\varphi,\bar{\varphi})-Rm_{\tilde{g}}(x)(\varphi,\bar{\varphi})\geq -\epsilon|\varphi|^2_{\tilde{g}}
\end{align*}
for any isotropic 2-vector $\varphi=z\wedge w\in\bigwedge^2T_xM^{\mathbb{C}}$ satisfying $\hat{g}_{\lambda}(z,z)=\hat{g}_{\lambda}(w,w)=\hat{g}_{\lambda}(z,w)=0$ and any $x\in M$ in the region $\{\rho(x)< e^{-\lambda^2}\}$. 
\end{corollary}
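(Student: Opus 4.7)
The plan is to combine the perturbation expansion from the proof of Proposition 3.3 with the $2$-positivity trick from Corollary 3.2. The region $\{\rho(x) < e^{-\lambda^2}\}$ is where $\hat g_\lambda = \tilde g + \tilde h_\lambda$, so the natural reference metric is $\tilde g$ rather than $g$, and we should recover exactly the estimate (12) but now applied to isotropic two-vectors.

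First, I would reuse verbatim the computation leading to (12) in the proof of Proposition 3.3. That inequality only invokes Lemma 2.1 together with the asymptotic bounds (10) and (11) on the covariant derivatives of $\tilde h_\lambda$, neither of which depends on $\varphi$ being of any particular algebraic type. Consequently, for our isotropic $\varphi$ we still obtain
\[ Rm_{\hat g_\lambda}(\varphi,\bar\varphi) - Rm_{\tilde g}(\varphi,\bar\varphi) \geq 2\lambda\, \beta(\lambda^{-2}\log \rho)\, S([\varphi] d\rho, \overline{[\varphi] d\rho}) - L\lambda^{-1}, \]
with $L$ independent of $\lambda$. The only remaining task is to show that $S([\varphi] d\rho, \overline{[\varphi] d\rho}) \geq 0$ in the isotropic setting.

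Next, I would borrow the tensor-modification argument from Corollary 3.2. By hypothesis $A_g - A_{\tilde g}$ is strictly $2$-positive on $\partial M$, so the restriction of $S$ to $T(\partial M)$ is strictly $2$-positive with respect to $g$. Choose a symmetric $(0,2)$-tensor $\tilde S$ such that $\tilde S - S$ is a large (but fixed) multiple of $d\rho \otimes d\rho$ and $\tilde S - 2ag$ is $2$-positive with respect to $g$ in a neighborhood of $\partial M$, for some small $a>0$. Since $\tilde g = g$ along $\partial M$ and $\hat g_\lambda - \tilde g$ is of order $O(\lambda e^{-2\lambda^2})$ in the region under consideration, for $\lambda$ large the tensor $\tilde S - a\hat g_\lambda$ is still $2$-positive with respect to $\hat g_\lambda$. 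The antisymmetry of $[\varphi]$ gives $([\varphi] d\rho)^i D_i \rho = 0$, and the isotropy of $\varphi$ with respect to $\hat g_\lambda$ then forces $[\varphi] d\rho$ to be an isotropic vector for $\hat g_\lambda$. Consequently
\[ \tilde S([\varphi] d\rho, \overline{[\varphi] d\rho}) \geq a\, |[\varphi] d\rho|^2_{\hat g_\lambda} \geq 0, \]
and since $\tilde S - S$ is a multiple of $d\rho \otimes d\rho$ while $[\varphi] d\rho$ is annihilated by $d\rho$, the same bound holds with $\tilde S$ replaced by $S$.

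Combining this non-negativity with the displayed estimate and using $\beta \geq 0$, I conclude $Rm_{\hat g_\lambda}(\varphi,\bar\varphi) - Rm_{\tilde g}(\varphi,\bar\varphi) \geq -L\lambda^{-1}$ throughout $\{\rho(x) < e^{-\lambda^2}\}$, which is at least $-\epsilon\,|\varphi|^2_{\tilde g}$ once $\lambda$ is sufficiently large. The main subtlety, and it is the only real obstacle, is ensuring that the $2$-positivity of $\tilde S - ag$ passes from $g$ to $\hat g_\lambda$; this is handled by fixing a strict margin $a>0$ first and then taking $\lambda$ large, exactly as in the proof of Corollary 3.2.
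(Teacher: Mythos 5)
Your proposal is correct and is exactly the argument the paper intends: the paper gives no separate proof of Corollary 3.4, stating only that it follows "the same line" as Proposition 3.3 and Corollary 3.2, which is precisely your combination of reusing the expansion (12) (valid for arbitrary $\varphi$) and then replacing the convexity-based positivity of $S$ on $\ker d\rho$ with the $\tilde S$-modification and $2$-positivity-on-isotropic-vectors trick from Corollary 3.2. Your observation that $[\varphi]\,d\rho$ lies in $\mathrm{span}\{z,w\}$ and is therefore $\hat g_\lambda$-isotropic, and that the margin $a$ must be fixed before sending $\lambda\to\infty$, are the two points the paper leaves implicit, and you handle both correctly.
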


\bigskip

We now sum up the results in this section:
\begin{corollary}
Let $\epsilon>0$ be an arbitrary positive real number. 
\begin{itemize}
\item[(i)]  Suppose that $(M,g)$ has a convex boundary and that $(M, \tilde{g})$ has a  weakly convex boundary such that $A_g > A_{\tilde{g}} \geq 0$. If $\lambda>0$ is sufficiently large, then we have the point-wise inequality	
	\begin{align*}
    Rm_{\hat{g}_{\lambda}}(x)(\varphi,\bar{\varphi}) \geq \min\{Rm_g(x)(\varphi,\bar{\varphi}), Rm_{\tilde{g}}(x)(\varphi,\bar{\varphi})\} - \epsilon\max\{|\varphi|^2_g,|\varphi|^2_{\tilde{g}}\}
	\end{align*}
	for all $\varphi\in \bigwedge^2T_xM^{\mathbb{C}}$ and each point $x\in M$.
\item[(ii)]  Suppose that $(M,g)$ has a two-convex boundary and that $(M, \tilde{g})$ has a  weakly two-convex boundary such that 
	$$ A_g(X,X)+A_g(Y,Y) > A_{\tilde{g}}(X,X) + A_{\tilde{g}}(Y,Y) \geq 0 $$
	for all orthonormal $X,Y\in T(\partial M)$. If $\lambda>0$ is sufficiently large, then we have the point-wise inequality
	\begin{align*}
    Rm_{\hat{g}_{\lambda}}(x)(\varphi,\bar{\varphi}) \geq \min\{Rm_g(x)(\varphi,\bar{\varphi}), Rm_{\tilde{g}}(x)(\varphi,\bar{\varphi})\} - \epsilon\max\{|\varphi|^2_g,|\varphi|^2_{\tilde{g}}\}
	\end{align*}
	for all isotropic 2-vector $\varphi=z\wedge w\in\bigwedge^2T_xM^{\mathbb{C}}$ satisfying $\hat{g}_{\lambda}(z,z)=\hat{g}_{\lambda}(w,w)=\hat{g}_{\lambda}(z,w)=0$ and each point $x\in M$.
\item[(iii)]  Suppose that $(M,g)$ has a mean-convex boundary and that $(M, \tilde{g})$ has a  weakly mean-convex boundary such that $H_g \geq H_{\tilde{g}} >0$. If $\lambda>0$ is sufficiently large, then we have the point-wise inequality	
	\begin{align*}
    R_{\hat{g}_{\lambda}}(x) \geq \min\{R_g(x), R_{\tilde{g}}(x)\} - \epsilon
	\end{align*}
	for each point $x\in M$.
\end{itemize}
\end{corollary}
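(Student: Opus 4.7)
The plan is to split $M$ into the two regions used in the construction of $\hat{g}_\lambda$ and invoke the pointwise estimates already proved region-by-region. Parts (i) and (ii) reduce immediately to the earlier results in this section, while part (iii) requires establishing scalar-curvature analogs of Propositions 3.1 and 3.3.

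For part (i), fix $x \in M$ and a two-vector $\varphi \in \bigwedge^2 T_xM^{\mathbb{C}}$. If $\rho(x) \geq e^{-\lambda^2}$, Proposition 3.1 gives $Rm_{\hat{g}_\lambda}(x)(\varphi,\bar\varphi) \geq Rm_g(x)(\varphi,\bar\varphi) - \epsilon |\varphi|_g^2$, and if $\rho(x) < e^{-\lambda^2}$, Proposition 3.3 gives $Rm_{\hat{g}_\lambda}(x)(\varphi,\bar\varphi) \geq Rm_{\tilde g}(x)(\varphi,\bar\varphi) - \epsilon |\varphi|_{\tilde g}^2$. In either case the right-hand side dominates $\min\{Rm_g(x), Rm_{\tilde g}(x)\}(\varphi,\bar\varphi) - \epsilon \max\{|\varphi|_g^2, |\varphi|_{\tilde g}^2\}$, which is the claim. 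Part (ii) is identical, using Corollary 3.2 and Corollary 3.4 in place of the propositions; here I would only note that the isotropy condition is taken with respect to $\hat{g}_\lambda$ and is therefore the same constraint on $\varphi$ in both regions, so the two estimates match their hypotheses.

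For part (iii) the strategy is the same splitting, but I first need to establish scalar-curvature versions of Propositions 3.1 and 3.3 under the weaker mean-convexity hypothesis. Starting from Lemma 2.1 and contracting the identity $\hat R_{ijkl} - R_{ijkl} = \frac12 g^{pq} R_{ijkp} h_{ql} - \frac12 g^{pq} R_{ijlp} h_{kq} + E_{ijkl} + F_{ijkl}$ with $g^{ik}g^{jl}$ at a point of a $g$-orthonormal frame, the leading $\lambda$-term in the difference $R_{\hat g_\lambda} - R_g$ becomes
\[ 2\lambda(-\chi''(\lambda\rho)) \bigl(\mathrm{tr}_g(S) - S(D\rho,D\rho)|D\rho|_g^2\bigr), \]
which on $\partial M$ equals $2\lambda(-\chi''(\lambda\rho)) \cdot 2(H_g - H_{\tilde g}) \geq 0$, replacing the role of $S([\varphi]d\rho, \overline{[\varphi]d\rho}) \geq a|[\varphi]d\rho|^2$ in Proposition 3.1. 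The contractions of the $O(\chi'(\lambda\rho))$ terms produce, after using $D^2\rho|_{T\partial M} = -A_g$ on the boundary together with the identity (1), a leading contribution proportional to $\chi'(\lambda\rho) \cdot H_{\tilde g} > 0$, which plays the role of the $a^2 \chi'(\lambda\rho)$ term driving the estimate in the transition region $\{e^{-\lambda^2} \leq \rho \leq s_0\lambda^{-1}\}$. In the inner region $\{\rho < e^{-\lambda^2}\}$, the analog of Proposition 3.3 is even easier because the leading positive term from the trace of~(10) gives $2\lambda \beta(\lambda^{-2}\log\rho) \cdot 2(H_g - H_{\tilde g}) \geq 0$ on the boundary, and continuity extends this to a small neighborhood. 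With these two analogs in place, the same dichotomy used for (i) and (ii) produces the scalar-curvature inequality.

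The main obstacle I anticipate is the strict positivity needed in the transition region for part (iii): the mean-convexity assumption $H_g \geq H_{\tilde g}$ gives only a non-strict leading term, and the strictness $H_{\tilde g} > 0$ must be exploited carefully to dominate the quadratic $\chi'(\lambda\rho)|[\varphi]d\rho|$-type error terms arising from $F_{ijkl}$ after contraction. This is the analog of choosing the auxiliary tensor $\tilde S$ with $\tilde S - ag$ positive definite in Proposition 3.1; here the corresponding scalar statement is that $\mathrm{tr}_g(\tilde S) - na > 0$, which is why the paper requires $H_{\tilde g} > 0$ strictly rather than merely $H_{\tilde g} \geq 0$. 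Once this positivity is pinned down, the argument proceeds by the same $x(Ax - B) \geq -B^2/(4A)$ trick used at the end of Proposition 3.1 to absorb the error terms, and the three pieces combine to give the stated pointwise inequality for $R_{\hat{g}_\lambda}$.
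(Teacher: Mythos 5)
Your handling of parts (i) and (ii) is exactly the paper's (implicit) argument: (i) follows by splitting $M$ into $\{\rho \geq e^{-\lambda^2}\}$ and $\{\rho < e^{-\lambda^2}\}$ and applying Proposition 3.1 and Proposition 3.3 respectively, and (ii) follows identically using Corollary 3.2 and Corollary 3.4; your remark that the isotropy constraint is taken with respect to $\hat g_\lambda$ and therefore states the same condition on $\varphi$ in both regions is the correct bookkeeping needed to make the two hypotheses line up.

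For part (iii) you diverge from the paper in a noteworthy way: the paper offers no derivation at all, citing instead Theorem 5 of \cite{BMN} for the scalar-curvature estimate. Your attempt to rederive it by tracing Lemma 2.1 is in the right spirit, and your geometric observation that the tangential trace of $S$ on $\partial M$ equals $2(H_g - H_{\tilde g})$ is correct (it follows directly from $\frac{1}{2}S|_{T\partial M} = A_g - A_{\tilde g}$). However, two gaps keep the sketch from being a proof. First, the leading coefficient is off: contracting the Hessian term in $E_{ijkl}$ with $g^{ik}g^{jl}$ gives $\lambda(-\chi'')\bigl(|D\rho|^2\,\mathrm{tr}_g S - S(D\rho,D\rho)\bigr)$, not twice that, and the $|D\rho|^2$ factor sits on $\mathrm{tr}_g S$, not on $S(D\rho,D\rho)$. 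Second, and more seriously, you contract $\hat R_{ijkl}$ with $g^{ik}g^{jl}$ and call the result $R_{\hat g_\lambda} - R_g$; but the scalar curvature of $\hat g_\lambda$ is $\hat g^{ik}\hat g^{jl}\hat R_{ijkl}$, so there is an additional term $(\hat g^{ik}\hat g^{jl} - g^{ik}g^{jl})\hat R_{ijkl}$ to control. In the transition region $\hat R_{ijkl}$ contains contributions of order $\lambda$ while $h_\lambda$ is only $O(\lambda^{-1})$, so this cross-term is $O(1)$ and cannot simply be dropped; it has to be shown to cancel or to be absorbed by the positive $\chi'(\lambda\rho)$-term. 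This is precisely the kind of bookkeeping that the $(\varphi,\bar\varphi)$-contraction of Propositions 3.1 and 3.3 avoids (since a $(2,0)$-tensor contraction needs no inverse metric), and it is where your analogy breaks down. If you want a self-contained proof of (iii), you need to carry out the full $\hat g^{-1}$-contraction; otherwise, the cleanest route is the paper's, namely to cite \cite{BMN}, Theorem 5.
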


Assertion (iii) in above corollary was given by Theorem 5 in \cite{BMN}. 

\bigskip
\bigskip

\section{Proof of Main Theorem 1}

Assertion (i) in Main Theorem 1 follows from the definition of $\hat{g}_{\lambda}$ in (2). Assertion (iv) follows from Corollary 3.5 for sufficiently large $\lambda>0$.
\begin{proof}[Proof of (ii):]\ 

 Suppose that $(M,g)$ has a convex boundary and that $(M, \tilde{g})$ has a  weakly convex boundary such that $A_g > A_{\tilde{g}} \geq 0$. Suppose also that $(M,g)$ and $(M, \tilde{g})$ are PIC1. Let $\varphi=z\wedge w\in \bigwedge^2T_xM^{\mathbb{C}}$  satisfies the condition $\hat{g}_{\lambda}(z,z)\hat{g}_{\lambda}(w,w) - \hat{g}_{\lambda}(z,w)^2 =0$. This condition is equivalent to the condition $\sum_{i,k}\varphi^{ik}\varphi^{ki}=0$ with respect to the metric $\hat{g}_{\lambda}$. We wish to prove that $Rm_{\hat{g}_{\lambda}}(x)(\varphi,\bar{\varphi})> 0$. We divide the proof into two cases: (1) $x$ is in the region $\{\rho\geq e^{-\lambda^2}\}$; (2) $x$ is in the region $\{\rho < e^{-\lambda^2}\}$.

For the first case, we have $\hat{g}_{\lambda} = g+h_{\lambda}$ where $h_{\lambda}=\lambda^{-1}\chi(\lambda\rho)S$. We fix an orthonormal frame $\{e_i\}$ with respect to $g$ such that this frame diagonalizes $h_{\lambda}$. i.e. $h_{ij}=\delta_{ij}\mu_j$, where $\mu_j$ are eigenvalues of $h$. We then evolve the frame $\{e_i\}$ in $T_xM$ by
\begin{align}
    \begin{cases}
    \frac{d}{ds}E_i(s) &=\ -\frac{1}{2}h_{\lambda}\circ E_i(s)\\
    E_i(0)&=\ e_i.
    \end{cases}
\end{align}
Then we see that $\{E_i(s)\}$ remains orthonormal with respect to $g_s=g+sh_{\lambda}$. In particular, we have $\{E_i(1)\}$ being orthonormal with respect to $\hat{g}_{\lambda}=g+h_{\lambda}$.

Now, we set $\varphi_s=\sum_{ij}\varphi^{ij}E_i(s)\wedge E_j(s)$ so that $\varphi_1 = \varphi$ and we define $\psi = \varphi_0 = \sum_{ij}\varphi^{ij}E_i(0)\wedge E_j(0)$. Hence we see that the condition $ \sum_{i,k}\varphi^{ik}\varphi^{ki} =0$ implies the condition $ \sum_{i,k}\psi^{ik}\psi^{ki} =0$ with respect to the metric $g$. Since $(M,g)$ is PIC1, it follows that $Rm_g(x)(\psi,\bar{\psi})> 0$. In view of Corollary 3.5, it suffices to show that  $Rm_g(x)(\varphi,\bar{\varphi})> 0$.

\bigskip
By writing $E_i(s)=A_i^k(s)e_k$, we observe that
\begin{align*}
    \frac{d}{ds}E_i(s) = \frac{d}{ds}A_i^k(s)e_k=-\frac{1}{2}h_l^kA_i^l(s)e_k=-\frac{1}{2}\sum_k\mu_kA^k_i(s)e_k
\end{align*}

for any $s\in[0,1]$. Then
\begin{align*}
    \frac{d}{ds}\varphi_s=2\varphi^{ij}E'_i(s)\wedge E_j(s) = -\mu_k\varphi^{ij}A_i^k(s)e_k\wedge E_j(s).
\end{align*}

So we have the estimate
\begin{align*}
    &\Big|\frac{d}{ds}Rm_g(\varphi_s,\bar{\varphi_s})\Big|\\
    &=\sum_k|\mu_k|\Big|Rm_g(\varphi^{ij}A_i^k(s)e_k\wedge E_j(s),\bar{\varphi_s}) + Rm_g(\varphi_s, \overline{\varphi^{ij}A_i^k(s)e_k\wedge E_j(s)}) \Big|\\
    &\leq\frac{C}{\lambda}.
\end{align*}

Here $C$ is a positive constant depending only on $(M,g)$. This implies that

\begin{align*}
    \Big|R_g(\varphi_1,\bar{\varphi_1}) - R_g(\varphi_0,\bar{\varphi_0})\Big|
    &\leq \int_0^1\frac{C}{\lambda}d\tau\leq \frac{C}{\lambda}. \notag
\end{align*}
Hence,
\begin{align}
    Rm_g(x)(\varphi,\bar{\varphi})\geq \frac{1}{2}Rm_g(x)(\psi,\bar{\psi})> 0
\end{align}
if $\lambda>0$ is sufficiently large. 

For case (2), we have $\hat{g}_{\lambda}=\tilde{g}- \lambda\rho^2\beta(\lambda^{-2}\log\rho)S$. Thus $h_{\lambda} = -\lambda\rho^2\beta(\lambda^{-2}\log\rho)S$. Following the same argument in case (1) and the fact that 
\begin{align*}
    |h_{\lambda}|\leq C(g)e^{-\lambda^2}\leq \frac{C(g)}{\lambda}
\end{align*}
in the region $\rho<e^{-\lambda^2}$ for sufficiently large $\lambda>0$,  we also have
$$Rm_{\tilde{g}}(x)(\varphi,\bar{\varphi})\geq \frac{1}{2}Rm_{\tilde{g}}(x)(\psi,\bar{\psi})$$
for sufficiently large $\lambda$. Since $(M, \tilde{g})$ is also PIC1, this gives $Rm_{\tilde{g}}(x)(\varphi,\bar{\varphi})>0$.

Combining the two cases together,  Corollary 3.5 implies that
\begin{align*}
    Rm_{\hat{g}_{\lambda}}(x)(\varphi,\bar{\varphi}) >0
\end{align*}
for sufficiently large $\lambda>0$. From this, we conclude that $(M,\hat{g}_{\lambda})$ is also PIC1 for sufficiently large $\lambda$. The other assertions in statement (ii) of the Main Theorem 1 can be proved similarly. 
	
\end{proof}

\bigskip
\begin{proof}[Proof of (iii):]\

 Suppose that $(M,g)$ has a two-convex boundary and that $(M, \tilde{g})$ has a  weakly two-convex boundary such that 
	$$ A_g(X,X)+A_g(Y,Y) > A_{\tilde{g}}(X,X) + A_{\tilde{g}}(Y,Y) \geq 0 $$
	for all orthonormal $X,Y\in T(\partial M)$. Suppose also that $(M,g)$ and $(M, \tilde{g})$  are  PIC. Let $\varphi=z\wedge w\in \bigwedge^2T_xM^{\mathbb{C}}$ be an isotropic 2-vector satisfying $\hat{g}_{\lambda}(z,z) = \hat{g}_{\lambda}(w,w) = \hat{g}_{\lambda}(z,w) =0$. This condition is equivalent to the condition $\sum_{k}\varphi^{ik}\varphi^{kj}=0$ with respect to the metric $\hat{g}_{\lambda}$. Using the same argument in the previous proof, it follows from Lemma 2.1 and Corollary 3.5 that we have $R_{\hat{g}_{\lambda}}(\varphi, \bar{\varphi})> 0$ for sufficiently large $\lambda>0$. Thus $(M,\hat{g}_{\lambda})$ is also PIC. The assertion then follows.

\end{proof}

\bigskip

\section{Proof of Main Theorem 2}
\bigskip

Since the boundary $\partial M$ is an embedded hypersurface in $M$, we can use the Tubular Neighborhood Theorem to find an open neighborhood $U$ of $\partial M$ and a Riemannian metric $\tilde{g}$ on $U$, such that $g-\tilde{g}=0$ at each point of $\partial M$, and that the boundary $\partial M$ is totally geodesic with respect to $\tilde{g}$. It follows from the Lemma below that we can choose $\tilde{g}$ such that it satisfies the same curvature conditions as $g$ does. Having fixed the neighborhood $U$, we define $\hat{g}_{\lambda}$ as in (2). The metric $\hat{g}_{\lambda}$ is well-defined for sufficiently large $\lambda$. Finally, Main Theorem 2 follows from applying Main Theorem 1 with choosing the metric $\tilde{g}$ to be the one constructed in the following Lemma. 
\bigskip

\begin{lemma}
Suppose that $(M,g)$ is a smooth, compact Riemannian manifold with boundary $\partial M$. Then there is a neighborhood $U$ of $\partial M$ and a Riemannian metric $\tilde{g}$ on $U$ such that
\begin{itemize}
    \item [(i)] $\tilde{g}-g=0$ on $\partial M$.
    \item[(ii)] $\partial M$ is totally geodesic with respect to $\tilde{g}$.
    \item[(iii)] If $(M,g)$ has a convex boundary, then
	\begin{itemize}
	\item $(M,g)$ has positive curvature operator $\implies$ $(U,\tilde{g})$ has positive curvature operator;
    \item $(M,g)$ is PIC1 $\implies$ $(U, \tilde{g})$ is PIC1;
    \item $(M,g)$ is PIC2 $\implies$ $(U,\tilde{g})$ is  PIC2.
	\end{itemize}	
\item[(iv)] If $(M,g)$ has a 2-convex boundary, then
	\begin{itemize}
	\item $(M,g)$ is PIC $\implies$ $(U,\tilde{g})$ is PIC.
	\end{itemize}
\item[(v)] $(U,\tilde{g})$ has positive scalar curvature.
\end{itemize}
\end{lemma}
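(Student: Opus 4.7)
The plan is to construct $\tilde g$ explicitly in Fermi coordinates around $\partial M$ by deleting the $r$-linear term of $g$'s tangential block at the boundary, and then verify the curvature conditions pointwise on $\partial M$ using the Gauss, Codazzi, and Riccati equations together with Lemmas~2.2 and~2.3; the conditions will propagate to a smaller tubular neighborhood by continuity and openness of strict positivity. Let $(r,y)$ be Fermi coordinates on a tubular neighborhood $U_0 = \{0\le r<\delta\}$, so $g = dr^2 + g_{ij}(r,y)\,dy^i\,dy^j$ and $(\partial_r g_{ij})(0,y) = -2(A_g)_{ij}$. Choose a smooth cutoff $\eta:[0,\infty)\to[0,1]$ with $\eta\equiv 1$ on a neighborhood of $0$ and $\eta\equiv 0$ for $r\ge\delta/2$, and define
\[
\tilde g \;:=\; dr^2 \;+\; \bigl[\,g_{ij}(r,y) - r\,\eta(r)\,(\partial_r g_{ij})(0,y)\,\bigr]\,dy^i\,dy^j
\]
on $U := U_0$. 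One checks directly that $\tilde g|_{\partial M} = g|_{\partial M}$, proving (i), and $\partial_r\tilde g_{ij}|_{r=0}=0$, so $A_{\tilde g}\equiv 0$ along $\partial M$, proving (ii).

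I next analyze $R_{\tilde g}$ at a boundary point $p$. Split
\[
\textstyle\bigwedge^{\!2} T_pM \;=\; \bigwedge^{\!2} T_p\partial M \;\oplus\; (T_p\partial M\otimes\nu)
\]
and consider the resulting block decomposition of the curvature operator. Since $A_{\tilde g}\equiv 0$ along all of $\partial M$, its intrinsic covariant derivatives vanish, so the Codazzi equation forces the off-diagonal (mixed) block to vanish. The Gauss equation (with $A_{\tilde g}=0$) yields the tangential block
\[
R_{\tilde g}\bigr|_{\mathrm{tang}} \;=\; R^{\partial M} \;=\; R_g\bigr|_{\mathrm{tang}} \;+\; \tfrac{1}{2}(A_g\owedge A_g),
\]
and the Riccati equation, combined with the direct calculation $\partial_r A_{\tilde g}|_{r=0} = \partial_r A_g|_{r=0}$ (valid since $\eta\equiv 1$ near $0$), gives the normal block
\[
R_{\tilde g}(X,\nu,Y,\nu) \;=\; R_g(X,\nu,Y,\nu) \;+\; (A_g\circ A_g)(X,Y).
\]
Thus $R_{\tilde g}$ is block-diagonal at $p$ with both diagonal blocks differing from the corresponding blocks of $R_g$ by nonnegative corrections: $\tfrac{1}{2}(A_g\owedge A_g)$ has nonnegative curvature operator (resp. is weakly PIC) whenever $A_g\ge 0$ (resp. when $A_g$ is two-nonnegative), by Lemma~2.2 (resp. Lemma~2.3), while $A_g\circ A_g$ is always positive semidefinite.

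Since the diagonal blocks of a strictly positive curvature operator are themselves strictly positive on their respective subspaces, both diagonal blocks of $R_{\tilde g}$ are strictly positive whenever $R_g$ is, and a block-diagonal symmetric operator with strictly positive diagonal blocks is itself strictly positive; this proves the positive-curvature-operator statement in (iii). For PIC1 and PIC2, an analogous block decomposition of the complex $2$-vector $\varphi = z\wedge w = \varphi_t + \varphi_m$ along the tangent-normal splitting reduces the positivity of $R_{\tilde g}(z,w,\bar z,\bar w)$ to positivity of the two block-diagonal contributions, which again follow from Lemmas~2.2--2.3 applied to the corresponding corrections. For (v), a trace computation yields $\operatorname{scal}_{\tilde g}-\operatorname{scal}_g = H_g^2 + |A_g|^2$ on $\partial M$, which is strictly positive under mean-convexity, so positive scalar curvature is preserved. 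The remaining conclusions extend from $\partial M$ to a smaller tubular neighborhood $U\subset U_0$ by continuity of $R_{\tilde g}$ and openness of strict positivity.

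The main obstacle will be the PIC case of (iv): a $\tilde g$-isotropic $(z,w)$ does not force its tangential components $(z_t,w_t)$ to be $g|_{\partial M}$-isotropic, so the PIC hypothesis on $(M,g)$ does not directly control the tangential-block contribution $R^{\partial M}(\varphi_t,\bar\varphi_t)$. This will be resolved by first observing (via a change of basis within the complex isotropic $2$-plane $\mathrm{span}(z,w)$) that one may assume $w$ is tangent to $\partial M$ and $g|_{\partial M}$-isotropic; then the remaining relations $g(w,w)=g(z_t,w)=0$ place $(z_t,w)$ on the degeneracy locus $g(\cdot,\cdot)g(\cdot,\cdot)-g(\cdot,\cdot)^2=0$, which combined with Lemma~2.3 and the Gauss rewriting of $R^{\partial M}$ provides the needed positivity of the tangential contribution, while the normal contribution is handled by the PSD correction $A_g\circ A_g$ combined with positivity of $R_g$ on the independent pair $(w,\nu)$.
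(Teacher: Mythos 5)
There is a genuine error in your proposal, and it is fatal: the sign of the Riccati correction to the normal block is wrong. With $g = dr^2 + g_{ij}(r,y)\,dy^i dy^j$ in Fermi coordinates and $A_{ij}=-\tfrac12\partial_r g_{ij}$, the normal sectional curvature is
\[
R(X,\nu,Y,\nu)\;=\;-\tfrac12\,\partial_r^2 g(X,Y)\;+\;\tfrac14\,g^{kl}\,\partial_r g_{Xk}\,\partial_r g_{Yl}
\;=\;\partial_r A(X,Y)\;+\;(A\circ A)(X,Y),
\]
where the first equality is a direct Christoffel computation and the second uses $\partial_r g=-2A$. Your modification has $\partial_r\tilde g|_{r=0}=0$ and (since $\eta\equiv 1$ near $0$) $\partial_r^2\tilde g|_{r=0}=\partial_r^2 g|_{r=0}$, hence on $\partial M$
\[
R_{\tilde g}(X,\nu,Y,\nu)-R_g(X,\nu,Y,\nu)\;=\;-(A_g\circ A_g)(X,Y),
\]
which is \emph{negative} semidefinite, not positive semidefinite as you claim. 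Concretely: for a small geodesic ball in a round sphere (convex boundary, positive curvature operator), $\tilde g$ has strictly \emph{negative} normal sectional curvatures on $\partial M$; for the flat ball $B_1\subset\R^n$ one gets $\tilde g=dr^2+(1+r^2)\,g_{S^{n-1}}$ near the boundary, with $R_{\tilde g}(X,\nu,X,\nu)=-|X|^2<0$. Correspondingly, tracing the Gauss equation shows $\operatorname{scal}_{\tilde g}-\operatorname{scal}_g = H_g^2 - 3|A_g|^2$ on $\partial M$, not $H_g^2+|A_g|^2$, and this can be negative even when the boundary is strictly convex (e.g.\ when one principal curvature dominates the others). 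So the construction ``delete the $r$-linear term of the tangential block'' does give (i) and (ii), and your Gauss/Codazzi analysis of the tangential and mixed blocks is correct, but the resulting metric generically \emph{loses} positivity in the normal block, so none of (iii)--(v) follow.

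The paper's construction is designed precisely to dodge this issue. Instead of keeping the quadratic $r$-jet of $g$, it takes a bent warped product $\tilde g = \Phi^*\bigl(ds^2 + \cos^2(\theta s)\,g_{\partial M}\bigr)$ on a neighborhood of thickness $\delta\theta^{-3}$. This forces the normal block to be $\tilde R_{nbnd}=\theta^2\cos^2(\theta s)\,\delta_{bd}$, which can be made arbitrarily large positive by choosing $\theta$ large; the tangential block picks up an unwanted negative term $-\theta^2\tan^2(\theta s)$, but since $\theta s<\delta\theta^{-2}$ in the chosen neighborhood this term is of order $\delta^2\theta^{-2}$ and is absorbed. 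The only genuinely geometric input is the Gauss equation relating $R_{g_{\partial M}}$ to $R_g|_{\text{tang}}+A_g\owedge A_g$ (where convexity, resp.\ two-convexity, of $\partial M$ together with Lemmas~2.2 and~2.3 gives nonnegativity of the $A_g\owedge A_g$ contribution), and the large $\theta^2|\varphi^N|^2$ term absorbs all remaining errors from the normal directions. Your block-decomposition strategy and use of the Gauss equation and Lemmas~2.2--2.3 to control the tangential block is in the same spirit; what is missing is a mechanism to push the normal block up, which is exactly what the bending parameter $\theta$ provides.
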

\bigskip

\begin{proof}
By the tubular neighbourhood theorem, there is a neighbourhood $U$ of $\partial M$ that is diffeomorphic to an open set of the normal bundle of $\partial M$. Specifically, there is a diffeomorphism $\Phi:U\to V$ where $V=\{(x,s)\in\partial M\times\mathbb{R}_+: |s|<\delta\}$ and $\Phi(\partial M)=\partial M\times\{0\}$. Let $\theta>0$ to be a large constant specified later, we define $V_{\theta}\subset V$ by $V_{\theta}=\{(x,s)\in V: |s|<\delta\theta^{-3}\}$. Define on $V_{\theta}$ a product metric $ds^2+(\cos^2(\theta s)) g_{\partial M}$.   Now we define a metric $\tilde{g}$ on $U_{\theta}=\Phi^{-1}(V_{\theta})$ by
\begin{align}
    \tilde{g}=\Phi^*(ds^2+(\cos^2(\theta s)) g_{\partial M}).
\end{align}
It is easy to see that condition (i) is satisfied. Next, let $\{E_1,..,E_n\}$ be a local frame of $TU_{\theta}$ such that $\{d\Phi(E_a)\}_{a=1,..,n-1}$ is an O.N. frame with respect to $g_{\partial M}$ and $d\Phi(E_n)=\frac{\partial}{\partial s}$. From now on, the subscript $a,b,c,..$ varies over $1$ to $n-1$. Using the Koszul formula, we compute the connection terms of $\tilde{g}$:
\begin{align}
    &\tilde{g}(\tilde{D}_{E_a}E_b, E_c)= \cos^2(\theta s)g_{\partial M}(D_{d\Phi(E_a)}d\Phi(E_b), d\Phi(E_c)),\\
    &\tilde{g}(\tilde{D}_{E_a}E_b, E_n)= \frac{1}{2}\theta\sin(2\theta s)\delta_{ab},\notag\\
    &\tilde{g}(\tilde{D}_{E_a}E_n, E_b)= -\frac{1}{2}\theta\sin(2\theta s)\delta_{ab},\notag\\
    &\tilde{g}(\tilde{D}_{E_a}E_n, E_n)=0\notag,\\
    &\tilde{g}(\tilde{D}_{E_n}E_a, E_b)=-\frac{1}{2}\theta\sin(2\theta s)\delta_{ab},\notag\\
    &\tilde{g}(\tilde{D}_{E_n}E_a, E_n)=0,\notag\\
    &\tilde{D}_{E_n}E_n=0.\notag
\end{align}
Identifying $\Phi$ as the identity map, we then have
\begin{align}
    &\tilde{D}_{E_a}E_b = D_{E_a}E_b+\frac{1}{2}\theta\sin(2\theta s)\delta_{ab}E_n,\\
    &\tilde{D}_{E_a}E_n = -\theta\tan(\theta s)E_a,\notag\\
    &\tilde{D}_{E_n}E_a = -\theta\tan(\theta s)E_a,\notag\\
    &\tilde{D}_{E_n}E_n = 0.\notag
\end{align}
From this, we calculate the components of the Riemann curvature tensor of $\tilde{g}$:
\begin{align}
    &\tilde{R}_{abcd}=\cos^2(\theta s)(R_{g_{\partial M}})_{abcd}-\theta^2\sin^2(\theta s)\cos^2(\theta s)(\delta_{ac}\delta_{bd}-\delta_{ad}\delta_{bc}),\\
    &\tilde{R}_{nbnd} = \theta^2\cos^2(\theta s)\delta_{bd},\notag\\
    &\tilde{R}_{abnd} = 0.\notag
\end{align}
Now, it is easy to see from (17) that
\begin{align*}
    \tilde{D}_{E_a}E_b = D_{E_a}E_b+\frac{1}{2}\theta\sin(0)\delta_{ab}E_n = D_{E_a}E_b
\end{align*}
on $\partial M$. Hence condition (ii) is satisfied by $\tilde{g}$. 
\bigskip

Next, we fix a point $p\in U_{\theta}$. Note that the diffeomorphism $\Phi$ maps $p$ to a point $(\bar{p},s)\in\partial M\times \mathbb{R}_+$. Let $\varphi\in\bigwedge^2T_pU_{\theta}$ be a unit two-vector with respect to $\tilde{g}$. Since the vector fields $\tilde{E}_a = \frac{1}{\cos(\theta s)}E_a$ and $\tilde{E_n}=E_n$ form an orthonormal frame in $U_{\theta}$ with respect to $\tilde{g}$, we can write 
\begin{align*}
\varphi &=\sum_{a<b}\varphi^{ab}\tilde{E}_a\wedge \tilde{E}_b + \sum_{a<n}\varphi^{an}\tilde{E}_a\wedge \tilde{E}_n \\
	&= 	\sum_{a<b}\frac{1}{\cos^2(\theta s)}\varphi^{ab}E_a\wedge E_b + \sum_{a<n}\frac{1}{\cos(\theta s)}\varphi^{an}E_a\wedge E_n.
\end{align*}
 And we also denote $\varphi^T = \sum_{a<b}\varphi^{ab}\tilde{E}_a\wedge \tilde{E}_b$ and $\varphi^N = \sum_{a<n}\varphi^{an}\tilde{E}_a\wedge \tilde{E}_n$. Then by (18) we have
 \begin{align}
    R_{\tilde{g}}(p)(\varphi,\bar{\varphi}) &= \sum_{a<b}\sum_{c<d}\frac{1}{\cos^4(\theta s)}\varphi^{ab}\bar{\varphi}^{cd}\tilde{R}_{abcd}(p) + \sum_{a<n}\sum_{c<n}\frac{1}{\cos^2(\theta s)}\varphi^{an}\bar{\varphi}^{cn}\tilde{R}_{ancn}(p)\\
    &\notag= \sum_{a<b}\sum_{c<d}\frac{1}{\cos^2(\theta s)}\varphi^{ab}\bar{\varphi}^{cd}(R_{g_{\partial M}})_{abcd}(\bar{p})\\
    &\notag\quad  - \theta^2\tan^2(\theta s)\sum_{a<b}\sum_{c<d}\varphi^{ab}\bar{\varphi}^{cd}(\delta_{ac}\delta_{bd}-\delta_{ad}\delta_{bc}) +\theta^2\sum_{a<n}\sum_{c<n}\varphi^{an}\bar{\varphi}^{cn}\delta_{ac}\\
    &\notag=cos^2(\theta s)R_{g_{\partial M}}(\bar{p})(d\Phi(\varphi^T),d\Phi(\bar{\varphi}^T)) - \theta^2\tan^2(\theta s)|\varphi^T|^2_{\tilde{g}}+\theta^2|\varphi^N|^2_{\tilde{g}}.
\end{align}
Here $d\Phi(\varphi^T) = \sum_{a<b}\frac{1}{\cos^2(\theta s)}\varphi^{ab}d\Phi(E_a)\wedge d\Phi(E_b)$. By the Gauss equation, we have
\begin{align}
    &R_{g_{\partial M}}(\bar{p})(d\Phi(\varphi^T),d\Phi(\bar{\varphi}^T))\\
    &\notag= R_g(\bar{p})(d\Phi(\varphi^T),d\Phi(\bar{\varphi}^T)) + \sum_{a<b}\sum_{c<d}\frac{1}{\cos^4(\theta s)}\varphi^{ab}\bar{\varphi}^{cd}(A_{bd}A_{ac}-A_{ad}A_{bc}).
\end{align}
Here $A=A_g$ is the second fundamental form of the boundary with respect to $g$. Now, associating to $\varphi$ we define a two-vector $\psi\in\bigwedge^2T_{\bar{p}}M$ by
\begin{align}
\psi := 	\sum_{a<b}\frac{1}{\cos^2(\theta s)}\varphi^{ab}d\Phi(E_a)\wedge d\Phi(E_b) + \sum_{a<n}\frac{1}{\cos^2(\theta s)}\varphi^{an}d\Phi(E_a)\wedge\nu,
\end{align}
where $\nu$ is the unit normal vector field on $\partial M$ with respect to $g$. Hence $\{d\Phi(E_a),\nu\}$ forms an orthonormal frame around $\bar{p}\in\partial M$. Thus,
\begin{align}
	&R_g(\bar{p})(d\Phi(\varphi^T),d\Phi(\bar{\varphi}^T))\\
	&\notag= R_g(\bar{p})\left(\psi - \sum_{a<n}\frac{1}{\cos^2(\theta s)}\varphi^{an}d\Phi(E_a)\wedge\nu
,\quad\overline{\psi - \sum_{a<n}\frac{1}{\cos^2(\theta s)}\varphi^{an}d\Phi(E_a)\wedge\nu
}\right)\\
	&\notag > R_g(\bar{p})(\psi, \bar{\psi}) - C_1(g)|\varphi^N|_{\tilde{g}}.
\end{align}
Here $C(g)$ is a uniform constant depending only on $(M,g)$. Upon combining (19), (20) and (22) we obtain the following estimate
\begin{align}
	R_{\tilde{g}}(p)(\varphi,\bar{\varphi})&> \frac{1}{2}R_g(\bar{p})(\psi, \bar{\psi}) + \sum_{a<b}\sum_{c<d}\varphi^{ab}\bar{\varphi}^{cd}(A_{bd}A_{ac}-A_{ad}A_{bc})\\
	&\notag\quad - \theta^2\tan^2(\theta s)|\varphi^T|^2_{\tilde{g}}+\theta^2|\varphi^N|^2_{\tilde{g}} - C_1(g)|\varphi^N|_{\tilde{g}}.
\end{align}
Next, by noting that $\theta^3s<\delta \implies \theta\tan(\theta s)<B\theta^2s<B\delta\theta^{-1}$ for some positive constant $B$, we have
\begin{align}
	R_{\tilde{g}}(p)(\varphi,\bar{\varphi})&> \frac{1}{2}R_g(\bar{p})(\psi, \bar{\psi}) + \sum_{a<b}\sum_{c<d}\varphi^{ab}\bar{\varphi}^{cd}(A_{bd}A_{ac}-A_{ad}A_{bc})- \frac{B^2\delta^2}{\theta^2}|\varphi^T|^2_{\tilde{g}}\\
	&\notag\quad+\theta^2|\varphi^N|^2_{\tilde{g}} - C_1(g)|\varphi^N|_{\tilde{g}}.
\end{align}
Now, we are ready to prove statements (iii) and (iv) of the Lemma.

\bigskip

\noindent\textsl{Proof of (iii):}

Suppose that $(M,g)$ has convex boundary, then the second term in the RHS of (24) is positive, thus
\begin{align}
	R_{\tilde{g}}(p)(\varphi,\bar{\varphi})&> \frac{1}{2}R_g(\bar{p})(\psi, \bar{\psi}) + (\inf_{\partial M}\mu_1)^2|\varphi^T|^2_{\tilde{g}} - \frac{B^2\delta^2}{\theta^2}|\varphi^T|^2_{\tilde{g}}+\theta^2|\varphi^N|^2_{\tilde{g}} - C_1(g)|\varphi^N|_{\tilde{g}},
\end{align}
where $\mu_1$ is the smallest eigenvalue of $A_g$. If $(M,g)$ is PIC1, then we let $\varphi=z\wedge w\in\bigwedge^2T_pU^{\mathbb{C}}$ be a unit two-vector such that $\tilde{g}(z,z)\tilde{g}(w,w)-\tilde{g}(z,w)^2 = 0$. This condition is equivalent to the condition $\sum_{i<j}\varphi^{ij}\varphi^{ij}=0$ with respect to the metric $\tilde{g}$. By the definition of $\psi$, this implies that $\sum_{i<j}\psi^{ij}\psi^{ij} = 0$ with respect to the metric $g$. Since $(M,g)$ is PIC1, $R_g(\bar{p})(\psi, \bar{\psi})>0$, we obtain
\begin{align*}
	R_{\tilde{g}}(p)(\varphi,\bar{\varphi})&> (\inf_{\partial M}\mu_1)^2|\varphi|^2_{\tilde{g}} - \frac{B^2\delta^2}{\theta^2}|\varphi^T|^2_{\tilde{g}}+(\theta^2-(\inf_{\partial M}\mu_1)^2)|\varphi^N|^2_{\tilde{g}} - C_1(g)|\varphi^N|_{\tilde{g}}\\
	&> (\inf_{\partial M}\mu_1)^2|\varphi|^2_{\tilde{g}} - \frac{B^2\delta^2}{\theta^2}|\varphi^T|^2_{\tilde{g}} - \frac{C_1(g)^2}{\theta^2}\\
	&> \frac{1}{2}(\inf_{\partial M}\mu_1)^2|\varphi|^2_{\tilde{g}},
\end{align*}
provided that $\theta$ is sufficiently large. Hence $(U,\tilde{g})$ is also PIC1. The other two assertions under statement (iii) can be proved similarly.

\bigskip
\noindent\textsl{Proof of (iv):}

Suppose now that $(M,g)$ has a two-convex boundary and is PIC. Let $\varphi=z\wedge w\in\bigwedge^2T_pU^{\mathbb{C}}$ be a unit two-vector such that $\tilde{g}(z,z) = \tilde{g}(w,w)=\tilde{g}(z,w) = 0$. By a similar argument as in the proof of (iii), the definition of $\psi$ and the fact that $(M,g)$ is PIC imply the positivity of $R_g(\bar{p})(\psi, \bar{\psi})$. We will show that the second term in $RHS$ of (24) is positive modulus an error term involving the normal components. At the point $\bar{p}\in\partial M$, the restriction of $A_g$ to the tangent space $T_{\bar{p}}(\partial M)$ is 2-positive. We define a (0,2)-tensor $\tilde{A}_g$ by $\tilde{A}_g = A_g + \nu^*\otimes \nu^*$, thus $\tilde{A}_g$ is strictly 2-positive on $T_{\bar{p}}M$. Consequently, the Kulkarni-Nomizu product $\tilde{A}_g\owedge\tilde{A}_g$ is strictly PIC by Lemma 2.3. In fact, the above procedure can be done uniformly at each point on the boundary. On the other hand, if we extend the definition of $\varphi$ so that $\varphi^{ji}=-\varphi^{ij}, i<j$, the condition $\tilde{g}(z,z) = \tilde{g}(w,w)=\tilde{g}(z,w) = 0$ is equivalent to the condition $\sum_k\varphi^{ik}\varphi^{kj}=0$ with respect to the metric $\tilde{g}$. By the definition of $\psi$, this implies that $\sum_k\psi^{ik}\psi^{kj} = 0$ with respect to the metric $g$.  Therefore, we can find a uniform constant $a>0$ such that
\begin{align*}
	\tilde{A}_g\owedge\tilde{A}_g(\psi, \bar{\psi}) > a|\psi|_g^2
\end{align*}
 on $T_{\bar{p}}M$. It follows from the definition of $\tilde{A}_g$ that 
 \begin{align}
 \sum_{a<b}\sum_{c<d}\psi^{ab}\bar{\psi}^{cd}(A_{bd}A_{ac}-A_{ad}A_{bc})	 > \frac{a}{2}|\psi|_g^2 - C_2(g)|\psi^N|_g^2,
 \end{align}
where $C_2(g)$ is a uniform constant depending only on $(M,g)$. From the definition of $\psi$, we then have
 \begin{align}
 \sum_{a<b}\sum_{c<d}\varphi^{ab}\bar{\varphi}^{cd}(A_{bd}A_{ac}-A_{ad}A_{bc})	 > \frac{a}{2}|\varphi|_{\tilde{g}}^2 - C_2(g)|\varphi^N|_{\tilde{g}}^2.
 \end{align}
Combining (24), (27) and the positivity of $R_g(\bar{p})(\psi, \bar{\psi})$, we obtain 
\begin{align*}
	R_{\tilde{g}}(p)(\varphi,\bar{\varphi})&> \frac{a}{2}|\varphi|^2_{\tilde{g}} - \frac{B^2\delta^2}{\theta^2}|\varphi^T|^2_{\tilde{g}}+(\theta^2-C_2(g))|\varphi^N|^2_{\tilde{g}} - C_1(g)|\varphi^N|_{\tilde{g}}\\
	&>  \frac{a}{2}|\varphi|^2_{\tilde{g}} - \frac{B^2\delta^2}{\theta^2}|\varphi^T|^2_{\tilde{g}} - \frac{C_1(g)^2}{\theta^2}\\
	&>  \frac{a}{4}|\varphi|^2_{\tilde{g}},
\end{align*}
 provided that $\theta$ is sufficiently large. Hence $(U,\tilde{g})$ is also PIC.
 \bigskip
 
 \noindent\textsl{Proof of (v)}:
 
 We have
 \begin{align*}
 R_{\tilde{g}} &= \sum_{i,j=1}^n\tilde{R}(\tilde{E}_i, \tilde{E}_j, \tilde{E}_i, \tilde{E}_j)	\\
 &= \sum_{a,b=1}^{n-1}\frac{1}{\cos^4(\theta s)}\tilde{R}_{abab} + 2\sum_{a=1}^{n-1}\frac{1}{\cos^2(\theta s)}\tilde{R}_{anan}\\
 &= \sum_{a,b=1}^{n-1}\frac{1}{\cos^2(\theta s)}(R_{g_{\partial M}})_{abab} - (n-1)^2\theta^2\tan^2(\theta s) + 2(n-1)\theta^2\\
 &> \sum_{a,b=1}^{n-1}\frac{1}{\cos^2(\theta s)}(R_{g_{\partial M}})_{abab} - (n-1)^2\frac{B^2\delta^2}{\theta^2} + 2(n-1)\theta^2\\
 &>0,
 \end{align*}
if $\theta$ is sufficiently large.

\end{proof}

\bigskip

\providecommand{\bysame}{\leavevmode\hbox to3em{\hrulefill}\thinspace}

%\providecommand{\MR}{\relax\ifhmode\unskip\space\fi MR }
% \MRhref is called by the amsart/book/proc definition of \MR.
\providecommand{\MRhref}[2]{%
  \href{http://www.ams.org/mathscinet-getitem?mr=#1}{#2}
}
\providecommand{\href}[2]{#2}


\begin{thebibliography}{10}




\bibitem{Bre08}
Simon Brendle, \emph{ A general convergence result for the Ricci flow in higher dimensions}, Duke. Math. J. \textbf{145} (2008), no.~3, 585-601. MR 2462114.


\bibitem{Bre19}
Simon Brendle, \emph{ Ricci flow with surgery on manifolds with positive isotropic curvature}, Annals of Mathematics. \textbf{190} (2019), no.~2, 465-559.

\bibitem{BS}
Simon Brendle and Richard Schoen, \emph{Manifolds with 1/4-pinched curvature are space
forms}, J. Amer. Math. Soc. \textbf{22} (2009), no.~1, 287-307. MR 2449060.


\bibitem{BMN}
Simon Brendle, Fernando C. Marques and Andre Neves, \emph{Deformations of the hemisphere that increase scalar curvature}, Inventiones mathematicae. \textbf{185} (2011), 175-197.

\bibitem{Gm}
M. Gromov, \emph{Scalar curvature of manifolds with boundaries: natural questions and artificial constructions}, preprint, https://arxiv.org/abs/1811.04311 (2018)

\bibitem{GL}
M. Gromov and H. Lawson, \emph{Spin and scalar curvature in the presence of a fundamental group. I}, Annals of Mathematics. \textbf{111} (1980), no.~2, 209-230.

\bibitem{MS}
Donovan McFeron and G$\acute{a}$bor Sz$\acute{e}$kelyhidi, \emph{On the Positive Mass Theorem for Manifolds with Corners}, Communications in Mathematical Physics. \textbf{313} (2012), 425-443.  


\bibitem{Miao}
Pengzi Miao, \emph{Positive Mass Theorem on Manifolds admitting Corners along a Hypersurface}, Adv.Theor. Math. Phys. \textbf{6} (2002), 1163-1182.


\bibitem{MM}
Mario J. Micallef and John Douglas Moore, \emph{Minimal Two-Spheres and the Topology of Manifolds with Positive Curvature on Totally Isotropic Two-Planes}, Annals of Mathematics. \textbf{127} (1988), no.~1, 199-227.


\bibitem{Sch}
A. Schlichting, \emph{Smoothing singularities of Riemannian metrics while preserving lower curvature bounds}, Ph.D. thesis, Otto von Guericke University Magdeburg, 2014.


\bibitem{ST}
Yuguang Shi and Luen-Fai Tam, \emph{Positive Mass Theorem and the Boundary Behaviors of Compact Manifolds with Nonnegative Scalar Curvature}, J. Differential Geom. \textbf{62} (2002), no.~1, 79-125.





\end{thebibliography}
\end{document}